\newtheorem{theorem}{Theorem}[section]
\newtheorem{corollary}[theorem]{Corollary}
\newtheorem{lemma}[theorem]{Lemma}
\newtheorem{conjecture}[theorem]{Conjecture}
\theoremstyle{definition}
\newtheorem{definition}[theorem]{Definition}
\newcommand{\rr}{\mathds{R}}
\DeclareMathOperator{\EMP}{EMP}
\title{High-dimensional envy-free partitions}
\subjclass{91B32, 52A37, 55M20, 28A75}
\keywords{Mass partition, KKM cover, Envy-free partition, Equivariant topology, Voronoi diagram}
\author[Sober\'on]{Pablo Sober\'on}\address{Baruch College, City University of New York, New York, NY 10010 \newline The Graduate Center, City University of New York, NY 10016} 
\email{psoberon@gc.cuny.edu}
\author[Yu]{Christina Yu}\address{Massachusetts Institute of Technology, Cambridge, MA 02139} 
\email{chryu@mit.edu}
\thanks{Sober\'on's research is supported by NSF grant DMS 2054419 and a PSC-CUNY TRADB52 award.  Yu's research on this project was supported with grant NSF DMS 2051026.}
\begin{document}

\begin{abstract}
A vast array of envy-free results have been found for the subdivision of one-dimensional resources, such as the interval $[0,1]$.   The goal is to divide the space into $n$ pieces and distribute them among $n$ observers such that each receives their favorite pieces.  We study high-dimensional versions of these results.  We prove that several spaces of convex partitions of $\rr^d$ allow for envy-free division among any $n$ observers.  We also prove the existence of convex partitions of $\rr^d$ which allow for envy-free divisions among several groups of $n$ observers simultaneously. 
\end{abstract}

\maketitle

\section{Introduction}

The problem of fair division is central to both mathematical economics and topological combinatorics.  Given a resource to be shared between $n$ people, can we partition and distribute it so that each person receives a ``fair" share according to their subjective measure of value? The methods to solve such problems often involve topological tools \cites{RoldanPensado2022, Zivaljevic2017}.  In this manuscript, we are interested in studying how two large families of fair partition problems --- mass partitions and envy-free partitions --- relate to each other.  

In a mass partition problem, we are given a family of absolutely continuous probability measures on $\rr^d$, and a family $\mathcal{H}$ of partitions of $\rr^d$.  We will say that a measure on $\rr^d$ is \textit{absolutely continuous} if it is absolutely continuous with respect to the Lebesgue measure --- this will be the main condition on our measures.  We want to know if there exists a partition in $\mathcal{H}$ that splits each measure in a predetermined way (oftentimes, we want each measure to be split into parts of equal value).  The quintessential result is the ham sandwich theorem, attributed to Steinhaus.

\begin{theorem}[Ham Sandwich Theorem\cite{Steinhaus1938}]\label{thm:ham-sandwich}
    Given $d$ absolutely continuous probability measures $\mu_1, \dots, \mu_d$ on $\rr^d$, there exists a hyperplane $H$ such that its two closed halfspaces $H^+$, $H^-$ satisfy
    \[
    \mu_i (H^+) = \mu_i (H^-) = \frac{1}{2} \qquad \mbox{for }i=1,\dots,d.
    \]
\end{theorem}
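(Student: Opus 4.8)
The plan is to deduce the statement from the Borsuk--Ulam theorem. First I would parametrize the closed halfspaces of $\rr^d$, together with two degenerate ``halfspaces'', by the points of the sphere $S^d \subset \rr^{d+1}$: to a point $a = (a_0, a_1, \dots, a_d) \in S^d$ associate the closed set
\[
H_a^+ = \{\, x \in \rr^d : a_1 x_1 + \dots + a_d x_d \le a_0 \,\}.
\]
When $(a_1, \dots, a_d) \ne 0$ this is a genuine closed halfspace whose bounding hyperplane we call $H_a$, and its antipode $-a$ yields the complementary halfspace $H_a^- = H_{-a}^+$ with the same boundary. The two poles $a = (\pm 1, 0, \dots, 0)$ give $H_a^+ = \rr^d$ or $H_a^+ = \emptyset$.

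Next I would define $g \colon S^d \to \rr^d$ by $g(a) = \bigl( \mu_1(H_a^+) - \tfrac{1}{2}, \dots, \mu_d(H_a^+) - \tfrac{1}{2} \bigr)$ and check two properties. First, $g$ is odd: since each $\mu_i$ is absolutely continuous, the bounding hyperplane $H_a$ is $\mu_i$-null, so $\mu_i(H_a^+) + \mu_i(H_{-a}^+) = 1$, whence $g(-a) = -g(a)$ (this holds trivially at the poles as well). Second, $g$ is continuous: if $a^{(k)} \to a$ in $S^d$, then the symmetric differences $H_{a^{(k)}}^+ \triangle H_a^+$ have Lebesgue measure tending to $0$ on every bounded region, so by absolute continuity and a routine truncation argument $\mu_i(H_{a^{(k)}}^+) \to \mu_i(H_a^+)$ for each $i$.

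Then I would invoke the Borsuk--Ulam theorem: any continuous map $S^d \to \rr^d$ takes equal values on some antipodal pair, so there is $a^* \in S^d$ with $g(a^*) = g(-a^*)$; combined with oddness this forces $g(a^*) = 0$, i.e.\ $\mu_i(H_{a^*}^+) = \tfrac{1}{2}$ for every $i$. Finally I would note that $a^*$ cannot be a pole, since there $\mu_i(H_{a^*}^+)$ equals $1$ or $0$, not $\tfrac{1}{2}$; hence $a^*$ corresponds to a genuine hyperplane $H = H_{a^*}$ with $\mu_i(H^+) = \mu_i(H^-) = \tfrac{1}{2}$ for all $i$.

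As for the difficulty: once the Borsuk--Ulam theorem is granted the argument is short, and the only genuinely technical point is the continuity of $g$. This is precisely where absolute continuity is indispensable — without it $g$ could jump as a positive-measure slab is crossed, and both oddness and continuity would fail. The degenerate poles are a minor bookkeeping matter dispatched by the observation above, and I expect no other obstacles.
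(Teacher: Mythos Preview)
Your argument is the standard Borsuk--Ulam proof of the ham sandwich theorem and is correct as written; the parametrization by $S^d$, the oddness and continuity verifications, and the exclusion of the poles are all handled properly.

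There is nothing to compare against, however: the paper does not supply its own proof of \cref{thm:ham-sandwich}. The theorem is stated as a classical result with a citation to Steinhaus and is used only as motivation in the introduction; the paper's original contributions begin with the envy-free results in later sections.
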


A measure can be interpreted as the way in which one person assigns value to different sets.  This means that for any $d$ observers, there is a way to split $\rr^d$ into two parts by a hyperplane such that each person agrees that the two parts have equal value.  We call such a partition that splits each measure into parts of equal value an \textit{equipartition}. 

In the problem of envy-free division, we want to divide a resource among $k$ observers such that each person thinks they got the best piece (though they may not agree that everyone received the same value). In other words, no person is envious of any other person; every person thinks their share is at least as good as anyone else's. The central envy-free result is the cake-splitting theorem.  We denote by $[n]$ the set $\{1,2,\dots, n\}$.

\begin{theorem}[Cake-splitting theorem; Stromquist 1980, Woodall 1980\cites{Stromquist1980, Woodall1980}]\label{thm:cake-partition}
    Let $\mu_1, \dots, \mu_n$ be absolutely continuous probablity measures in $[0,1]$.  Then, there exists a partition of $[0,1]$ into $n$ intervals $I_1, \dots, I_n$ and a permutation $\pi:[n] \to [n]$ such that
    \[
    \mu_i (I_{\pi(i)}) \ge \mu_i (I_{\pi(i')}) \qquad \mbox{for all }i,i'  \in [n].
    \]
\end{theorem}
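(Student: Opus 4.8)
The plan is to reduce the statement to Sperner's lemma via the well-known topological argument (a Sperner-lemma proof of this kind is due to F.~Simmons): parametrize the interval partitions, encode each observer's preferences as a closed cover of a simplex, and extract the matching from a fully labeled simplex.

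First I would identify the space of partitions of $[0,1]$ into $n$ ordered intervals with the simplex $\Delta^{n-1} = \{x \in \rr^n : x_j \ge 0,\ \sum_{j} x_j = 1\}$, a length vector $x$ giving intervals $I_j(x) = [x_1 + \cdots + x_{j-1},\ x_1 + \cdots + x_j]$. For each observer $i$ and label $j$ put
$$
A_i^j = \{x \in \Delta^{n-1} : \mu_i(I_j(x)) \ge \mu_i(I_l(x)) \text{ for all } l \in [n]\}.
$$
Three properties should be verified. (i) Each $A_i^j$ is closed: this is where absolute continuity is used, since $\mu_i$ non-atomic makes $x \mapsto \mu_i(I_j(x))$ continuous, so $A_i^j$ is an intersection of finitely many closed conditions. (ii) $\bigcup_{j \in [n]} A_i^j = \Delta^{n-1}$, because every partition has an $i$-favorite piece. (iii) If $x_j = 0$ then $x \notin A_i^j$: a zero-length interval has $\mu_i$-mass $0$, while $\sum_l \mu_i(I_l(x)) = 1$ forces some piece to have positive mass, and absolute continuity turns positive $\mu_i$-mass into positive length. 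In particular an $i$-favorite piece always has its index in the support of $x$.

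Next comes the Sperner argument. Choose triangulations $T_m$ of $\Delta^{n-1}$ with mesh tending to $0$, obtained by iterated barycentric subdivision; each $T_m$ is \emph{balanced}, i.e.\ its vertices admit an $n$-coloring under which every top-dimensional simplex is rainbow. Interpret the color $c(v) \in [n]$ of a vertex $v$ of $T_m$ as ``$v$ is owned by observer $c(v)$'', and label $v$ by $L(v) :=$ an arbitrary $c(v)$-favorite piece of the partition $v$. By (iii) the index $L(v)$ lies in the support of $v$, so $L$ is a legitimate Sperner labeling, and Sperner's lemma produces a top simplex $\sigma_m$ carrying all $n$ labels. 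Being also color-rainbow, its vertices can be written $v_1^{(m)}, \dots, v_n^{(m)}$ with $v_i^{(m)}$ owned by observer $i$, so $\pi_m(i) := L(v_i^{(m)})$ defines a permutation of $[n]$ with $v_i^{(m)} \in A_i^{\pi_m(i)}$.

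Finally, compactness closes the argument: after passing to a subsequence, $\pi_m$ is a fixed permutation $\pi$ and the shrinking simplices $\sigma_m$ converge to a single point $x^* \in \Delta^{n-1}$, so $v_i^{(m)} \to x^*$ for every $i$; since each $A_i^{\pi(i)}$ is closed we get $x^* \in A_i^{\pi(i)}$ for all $i$, and the intervals $I_j := I_j(x^*)$ together with $\pi$ satisfy the required inequalities. I expect the main obstacle to lie in the combinatorial core rather than in the measure-theoretic checks: one must produce balanced triangulations of arbitrarily small mesh and set up the labeling so that ``fully labeled'' forces a bijection between observers and pieces --- this is exactly a polychromatic (colorful) version of the KKM lemma --- and handling ties together with the limiting procedure is where care is needed.
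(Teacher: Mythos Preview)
The paper does not give its own proof of \cref{thm:cake-partition}; it is quoted as a classical result with attribution to Stromquist and Woodall. So there is no proof in the paper to compare your proposal against directly.

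That said, your argument is the standard Simmons--Su approach via Sperner's lemma on an owner-colored (balanced) triangulation, and it is correct. The measure-theoretic checks (i)--(iii) are fine; in (iii) you only need that a degenerate interval has $\mu_i$-measure zero (non-atomicity), not the stronger-sounding ``positive mass implies positive length'' you phrase it as, but the conclusion is the same. The existence of arbitrarily fine balanced triangulations (iterated barycentric subdivision) and the finiteness of the set of permutations make the compactness step go through without trouble.

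It is worth noting that the paper's own machinery does recover \cref{thm:cake-partition} as the special case $d=1$ of \cref{thm:delta-spaces}, with the $\Delta$-space being exactly your interval parametrization. The paper's route replaces the Sperner/balanced-triangulation step by a black-box appeal to the colorful KKM theorem of Asada et al.\ (\cref{thm:asada}). The payoff of that approach over yours is the ``secretive guest'' strengthening: the partition can be found knowing only $n-1$ of the $n$ measures, which your Sperner argument does not obviously yield without extra work. Conversely, your argument is more self-contained, resting only on Sperner's lemma rather than on \cref{thm:asada}.
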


Here, the permutation $\pi$ designates how the pieces should be allocated between the $n$ observers. Then, each person $i$ prefers the piece $\pi(i)$ that they receive, so this division is envy-free. More generally, we say a partition $(C_1,\dots, C_n)$ of $\rr^d$ is an \textit{envy-free partition} for $\mu_1,\dots,\mu_n$ if there is a permutation $\pi:[n]\rightarrow[n]$ such that $\mu_i(C_{\pi(i)})\geq \mu_i(C_{\pi(i')})$ for all $i,i'\in [n]$.  In other words, we can assign each piece to a measure such that each measure $\mu$ receives a piece with maximal $\mu$-measure.  There are many extensions of the cake-splitting theorem, involving more relaxed conditions on the preferences of the players \cite{Meunier2019}, secretive guests \cite{Asada:2018ix}, more guests than pieces in the partition \cite{Soberon2022}, more pieces than guests \cite{Nyman2020}, and divisions of undesirable resources \cites{Su:1999es, Avvakumov2023}.  The algorithmic versions are also of substantial interest \cites{Brams:1996wt, procaccia2015cake}.

For equipartitions, the number of measures we can split evenly is typically bounded by the dimension (or in some cases, a function of the dimension). By relaxing the fairness condition, an envy-free partition can satisfy more observers than the dimension. For example, \cref{thm:cake-partition} shows the existence of an envy-free partition of a one-dimensional resource for an \textit{arbitrary number of observers}.  In this manuscript, we will prove high-dimensional versions of this envy-free result.

A simple example of our results is as follows:

\begin{theorem}\label{thm:R2-two-lines}
    Given four absolutely continuous probability measures on $\rr^2$, there exists an envy-free partition that splits $\rr^2$ into four pieces using two lines.  Moreover, the partition can be found even if we only have access to three of the four measures.
\end{theorem}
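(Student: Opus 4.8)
The plan is to run the configuration-space and KKM/equivariant-topology machinery behind the cake-splitting theorem, with the space of ``two-line partitions'' of $\rr^2$ playing the role that the simplex of interval partitions plays there. Model an oriented line in $\rr^2$ as a point of a $2$-sphere (a direction and a signed offset, suitably compactified), and take an ordered pair of these to obtain a compact parameter space $X$, a product of two $2$-spheres. Each point of $X$ determines an ordered $4$-tuple of closed convex cells $(C_1,C_2,C_3,C_4)$ --- the four ``quadrants'' of the two oriented lines, indexed by the two $\pm$ signs --- with cells degenerating to half-planes or to the empty set when the lines are parallel, coincide, or run off to infinity. The symmetry group of the quadrant picture, the dihedral group $W$ of order $8$ generated by reversing the orientation of either line and by swapping the two lines, acts on $X$ and permutes the four labels through an embedding $W\hookrightarrow S_4$.

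Fix a tie-breaking rule; for each measure $\mu_i$, $i\in[4]$, and each label $j\in[4]$, let $P_{i,j}\subseteq X$ be the closed set of configurations where $C_j$ is $\mu_i$-maximal among the four cells. For each fixed $i$ the sets $P_{i,1},\dots,P_{i,4}$ cover $X$, and no point of any $P_{i,j}$ has $C_j$ empty, because the nonempty cells always carry total mass $1$. An envy-free partition is then precisely a point $x\in X$ together with a permutation $\pi\in S_4$ so that $x\in\bigcap_i P_{i,\pi(i)}$; to produce one I would feed this preference cover into the same topological mechanism that yields \cref{thm:cake-partition} --- a Sperner/KKM-type lemma, equivalently the nonexistence of a suitable $S_4$-equivariant map, or the nonvanishing of an equivariant index. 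Concretely this means promoting $X$ to an $S_4$-space, e.g.\ one of the form $(X\times S_4)/W$ on which $S_4$ acts by translation and $W$ diagonally, pushing the $P_{i,j}$ forward to an $S_4$-equivariant closed cover, and checking the Sperner/KKM boundary condition (supplied by the ``empty cell'' remark); the output is a point meeting one set from each index $i$ with pairwise distinct labels, i.e.\ the desired $(x,\pi)$.

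For the ``three out of four'' clause I would strengthen the target to a configuration $x$ for which, for \emph{every} label $k\in[4]$, the remaining three cells admit an envy-free allocation to $\mu_1,\mu_2,\mu_3$. Given such an $x$, hand $\mu_4$ whichever cell is $\mu_4$-maximal and split the other three among $\mu_1,\mu_2,\mu_3$; the result is envy-free for all four observers although only $\mu_1,\mu_2,\mu_3$ were used to place the two lines. This is the ``secretive agent'' refinement of the cake-splitting theorem carried over to $X$: running the previous step for $\mu_1,\mu_2,\mu_3$ in fact produces such a robust configuration, because the cover one constructs has the stronger intersection pattern that the secretive-agent argument exploits.

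I expect the real obstacle to be the equivariant topology of the parameter space rather than the measure-theoretic bookkeeping. The difficulty is that the intrinsic symmetry of a two-line partition is only the order-$8$ group $W$, whereas the allocation permutation $\pi$ ranges over all of $S_4$, so one must manufacture an $S_4$-equivariant setup and verify it has the cohomological nontriviality needed to force the rainbow point for $n=4$ --- complicated by the fact that $W$ does not act freely (two oriented lines can span a single line), so the degenerate strata, the compactification, and the tie-breaking must all be chosen compatibly, or one must work with an equivariant index rather than with a naive ``no equivariant map'' statement. The heuristic that this should succeed is the dimension count: the $4$-dimensional space $X$ offers exactly the freedom that four relaxed measures consume, and indeed restricting the lines to be perpendicular (or axis-parallel) shrinks the parameter space enough that the statement becomes false, which is why the two orientations must stay free.
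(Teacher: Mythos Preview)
Your proposal is a research plan, not a proof: you identify the configuration space, the symmetry group, and the shape of the argument, but you explicitly leave the decisive step---verifying that the $S_4$-equivariant index on $(X\times S_4)/W$ is nonzero---as an ``expected obstacle'' rather than a completed computation. Nothing you wrote forces the rainbow intersection to exist; the dimension count is only a heuristic, and the degeneracy/non-freeness issues you flag are exactly the places where such arguments can fail. As written, the proposal has a genuine gap at the equivariant-topology step.

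The paper sidesteps all of this with a much lighter device. Rather than parametrizing two oriented lines by $S^2\times S^2$ and wrestling with the dihedral action, it parametrizes the partitions directly by the simplex $\Delta^3$: a point $(x_1,x_2,x_3,x_4)\in\Delta^3$ places four points $p_0,\dots,p_3$ on the unit circle with consecutive arclengths $2\pi x_i$, and the two lines are the chords $p_0p_2$ and $p_1p_3$. The crucial feature is that $x_i=0$ collapses the region $C_i$ to a single point, so the preference sets $A_i=\{x:\mu(C_i)\text{ is maximal}\}$ form an honest KKM cover of $\Delta^3$. One then applies the Asada--Frick et al.\ colorful KKM theorem (\cref{thm:asada}), which already contains the secretive-agent strengthening, and finishes with a routine compactness argument to pass from measures supported in the disk to general measures. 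No equivariant cohomology, no index theory, no $S_4$-space construction---the simplex parametrization does all the work that your $W\hookrightarrow S_4$ promotion was supposed to do.
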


In other words, once we split $\rr^2$ into four pieces using two lines, we will be able to assign each piece to a measure such that each measure receives a piece with maximal measure.  Since we don't have access to the fourth measure $\mu_4$, this means that regardless of which piece has the largest value for $\mu_4$, there is a permutation assigning that piece to $\mu_4$ which generates an envy-free partition for all four measures.

If all measures are equal, this is a folklore result first noted by Courant and Robbins \cite{Courant1941}.  We prove a wide range of results of this kind, in which the spaces of partitions can be parametrized by a simplex of a sufficiently high dimension.  These include nested hyperplane partitions with fixed directions and generalized voronoi diagrams.  Nested hyperplane partitions with fixed directions have been used before to establish high-dimensional versions of the necklace splitting theorem \cites{Karasev:2016cn, Blagojevic:2018gt}.  In the necklace splitting problem, the goal is to find an equipartition of several measures among $k$ players by splitting the space into potentially more than $k$ pieces and then distributing them among the players \cites{Goldberg:1985jr, Alon:1987ur, Longueville2008}.

We also prove more general results for convex partitions of $\rr^d$.  A convex partition of $\rr^d$ into $n$ parts is a collection $C_1, \dots, C_n$ of closed convex sets in $\rr^d$ such that
\begin{itemize}
    \item the union of the $n$ sets is $\rr^d$, and
    \item the interiors of $C_1, \dots, C_n$ are pairwise disjoint.
\end{itemize}

We prove the existence of partitions which are fair partitions for several groups of measures simultaneously. 

\begin{theorem}\label{thm:envy-free-convex-simple}
    Let $n,d$ be positive integers, where $n$ is a prime power.  Let $\mu$ be an absolutely continuous probability measure on $\rr^d$, and let $(\mu^1_1, \dots, \mu^1_n), \dots, (\mu^{d-1}_1, \dots, \mu^{d-1}_n)$ be $d-1$ tuples of $n$ absolutely continuous probability measures on $\rr^d$ each.

    Then, there exists a convex partition $C_1, \dots, C_n$ of $\rr^d$ such that
    \[
    \mu(C_1) = \dots = \mu(C_n)
    \]
    and $(C_1, \dots, C_n)$ is an envy-free partition for $(\mu^r_1,\dots, \mu^r_n)$, for each $r=1,\dots, d-1$.  Moreover, the partition can be found even if we do not have access to $\mu^{r}_n$ for each $r\in[d-1]$.
\end{theorem}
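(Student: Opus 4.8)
The plan is to turn the statement into a single $S_n$-equivariant problem on a high-dimensional simplex: parametrize a family of convex partitions that already equalize $\mu$ by the points of a suitable simplex, encode the envy-freeness of each of the $d-1$ groups as a KKM-type cover of that simplex, and force a common ``good'' point by the same prime-power Borsuk--Ulam machinery that underlies convex equipartition theorems such as Nandakumar--Ramana-Rao.

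First I would fix, once and for all, a family of convex partitions of $\rr^d$ into $n$ parts, parametrized by the $(d-1)$-fold join $\Delta:=(\Delta^{n-1})^{*(d-1)}$. This is a simplex of dimension $(d-1)n-1$, and it carries the diagonal $S_n$-action that permutes the vertices of each join factor; $(d-1)n-1$ is exactly the dimension one needs to impose $d-1$ codimension-$(n-1)$ conditions with a little room to spare, and it reduces to the tight count when $d=2$, as in \cref{thm:R2-two-lines}. I want the family $x\mapsto(C_1(x),\dots,C_n(x))$ to be continuous, $S_n$-equivariant (with $S_n$ permuting the labels of the parts), to satisfy $\mu(C_i(x))=1/n$ for every $i$ and every $x$, and to degenerate on the facets of each join factor in the manner that a KKM argument requires (as the $k$-th barycentric coordinate of a join factor tends to $0$, the $k$-th part should shrink in a controlled way). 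Generalized Voronoi (power) diagrams provide such a family: choose the $n$ sites as an $S_n$-equivariant function of $x$ that spreads them apart as $x$ moves toward the boundary of a join factor, and take the Laguerre weights that force $\mu(C_i)=1/n$; these weights exist and vary continuously with the sites by the classical least-squares-clustering / optimal transport theorem for power diagrams. Alternatively, nested hyperplane partitions with $d-1$ fixed directions --- inserting, along the $r$-th direction, the hyperplanes that split $\mu$ into $n$ equal slabs --- give the same effect and are the construction used for high-dimensional necklace splitting \cites{Karasev:2016cn, Blagojevic:2018gt}.

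Next, for each group $r\in[d-1]$, encode envy-freeness by a KKM cover. For $i\in[n-1]$ --- only the first $n-1$ measures of the group, since $\mu^r_n$ is secret --- and $k\in[n]$, let $A^r_{i,k}\subseteq\Delta$ be the closed set of parameters $x$ at which the part $C_k(x)$ is weakly best for $\mu^r_i$. For fixed $i$ these sets cover $\Delta$, and the absolute continuity of $\mu^r_i$ together with the facet-degeneration of the family yields the boundary condition of a Sperner/KKM statement for each of these $n-1$ covers. The secretive cake-splitting phenomenon of Asada--Frick and coauthors \cite{Asada:2018ix} --- $n-1$ players' KKM covers of a simplex always share a point lying in a rainbow of $n-1$ of their sets, with enough combinatorial slack that the one unused label can be handed to the secret player whatever that player's favourite part turns out to be --- then converts a suitable point of $\Delta$ into an envy-free allocation of $(\mu^r_1,\dots,\mu^r_n)$ that does not use $\mu^r_n$.

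Finally I would assemble the $d-1$ covers into one $S_n$-equivariant test map $F\colon\Delta\to V$, where $V$ is the direct sum of $d-1$ copies of the standard $(n-1)$-dimensional $S_n$-representation, arranged so that if no $x$ is simultaneously ``good'' for all groups then $F$ has no zero; restricting $F$ to $\partial\Delta\cong S^{(d-1)n-2}$ would then give an $S_n$-map to $S(V)$. Here is where $n$ being a prime power enters, exactly as in the Nandakumar--Ramana-Rao type convex equipartition theorems: writing $n=p^k$ and restricting the action to the subgroup $(\zz/p)^k\le S_n$ acting freely and transitively on $[n]$, the Fadell--Husseini index (equivalently the relevant Euler class) computed with $\zz/p$ coefficients is nonzero precisely because $n$ is a prime power, so no such equivariant map exists and $F$ must vanish somewhere. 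I expect the genuine difficulty to be entirely in the first step: producing a simplex-parametrized family of $\mu$-equalizing convex partitions that is continuous, honestly $S_n$-equivariant, and correctly degenerate on every facet of every join factor. Once that family is in hand, the rest is an assembly of known tools --- continuity of Laguerre weights, the secretive KKM theorem, and the prime-power equivariant computation --- and the only delicate point there, as in \cites{Karasev:2016cn, Blagojevic:2018gt}, is controlling the limits where parts become empty or Voronoi sites collide so that $F$ extends continuously to the boundary.
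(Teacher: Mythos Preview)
Your plan has a genuine gap at exactly the place you flag as ``the genuine difficulty'': there is no evident $S_n$-equivariant, continuous family of convex $\mu$-equipartitions parametrized by the join $(\Delta^{n-1})^{*(d-1)}$ with KKM-type facet degeneration. Power diagrams that equalize $\mu$ never have empty parts (each has $\mu$-measure $1/n$), so there is no natural ``$k$-th part shrinks'' boundary behavior to feed into a KKM argument; and your alternative, nested hyperplane partitions along $d-1$ directions with $n$ equal $\mu$-slabs each, produces $n^{d-1}$ pieces rather than $n$ and carries no $S_n$-symmetry. The secretive KKM theorem of \cite{Asada:2018ix} is also stated for a single $\Delta^{n-1}$, not for a join with diagonal $S_n$-action, so even with a correct parametrization you would need a new colorful/secretive KKM statement on that join, which is not in the literature.

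The paper sidesteps all of this. It works directly on $\EMP(\mu,n)$, which is already the configuration space of $n$ sites in $\rr^d$, and invokes the Blagojevi\'c--Ziegler theorem (\cref{thm:BZ14}) as a black box: that theorem guarantees, for $n$ a prime power, that any $d-1$ continuous $S_n$-equivariant maps $\EMP(\mu,n)\to\rr^n$ can be simultaneously equalized. The key idea you are missing is how to encode envy-freeness as an equalizing condition. For each group $r$ and each partition $P$, the paper builds an $n\times n$ matrix $M^r(P)$ whose $(i,j)$ entry records (after an $\varepsilon$-softening and column normalization) whether $C_i$ is near-best for $\mu^r_j$; the $n$-th column is set identically to $1/n$ so that $\mu^r_n$ is never consulted. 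The $S_n$-equivariant functions to equalize are the \emph{row sums} of $M^r(P)$. At the equalizing partition each $M^r(P)$ becomes doubly stochastic, and Birkhoff's theorem then yields a permutation $\pi^r$ with all diagonal entries positive; because the $n$-th column is the constant $1/n$, one can force $\pi^r(n)$ to be whichever piece the secret measure $\mu^r_n$ ends up preferring. A limiting argument as $\varepsilon\to0$ finishes. This matrix-plus-Birkhoff trick is the substantive new step, and it replaces your attempted KKM/test-map assembly entirely.
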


The case when $\mu^r_j = \mu^r_{j'}$ for each $r \in [d-1]$ and any $j, j' \in [n]$ (each $n$-tuple just has a single measure repeated $n$ times) shows that for any $d$ measures on $\rr^d$, there exists a convex equipartition.  For this equipartition result, the case for general $n$ follows from the case with $n$ being a prime power via a simple subdivision argument.  This is a known extension of the ham sandwich theorem, with several different proofs \cites{Soberon:2012kp, Karasev:2014gi, Blagojevic:2014ey}, originally motivated by the Nandakumar--Ramana-Rao problem \cite{Nandakumar2012}.

The prime power condition in \cref{thm:envy-free-convex-simple} is due to the use of a much stronger theorem of Blagojevi\'c and Ziegler \cite{Blagojevic:2014ey} in our proof, for which this condition is essential.  However, we conjecture that \cref{thm:envy-free-convex-simple} should still hold for general $n$.  We can remove the prime power condition if we are satisfied with a fairness condition that is slightly weaker than envy-freeness.  Given $n$ absolutely continuous probability measures $\mu_1, \dots, \mu_n$ on $\rr^d$, we say that a partition $(C_1,\dots, C_n)$ of $\rr^d$ is a \textit{proportional partition} if there is a permutation $\pi:[n]\to[n]$ such that $\mu_i (C_{\pi(i)}) \ge 1/n$.  Note that envy-freeness implies proportionality, but the converse does not necessarily hold. In a proportional partition, each measure receives a piece with at least the average value, but this is not guaranteed to be the piece with the greatest value.

\begin{theorem}\label{thm:proportional-convex-simple}
    Let $n,d$ be positive integers.  Let $\mu$ be an absolutely continuous probability measure on $\rr^d$, and let $(\mu^1_1, \dots, \mu^1_n), \dots, (\mu^{d-1}_1, \dots, \mu^{d-1}_n)$ be $d-1$ tuples of $n$ absolutely continuous probability measures on $\rr^d$ each.

    Then, there exists a convex partition $C_1, \dots, C_n$ of $\rr^d$ such that
    \[
    \mu(C_1) = \dots = \mu(C_n)
    \]
    and $(C_1, \dots, C_n)$ is a proportional partition for $(\mu^r_1,\dots, \mu^r_n)$ for each $r\in[d-1]$.
\end{theorem}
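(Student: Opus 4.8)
To prove \cref{thm:proportional-convex-simple} the plan is to keep working with the same family of partitions that underlies \cref{thm:envy-free-convex-simple} --- convex equipartitions of $\mu$ arising from generalized Voronoi (weighted Laguerre) diagrams, as in the approach to the Nandakumar--Ramana-Rao problem of Blagojevi\'c and Ziegler \cite{Blagojevic:2014ey} --- but to extract the proportionality of the $d-1$ tuples from a covering argument of Knaster--Kuratowski--Mazurkiewicz type rather than from an equivariant obstruction. The equivariant input used for \cref{thm:envy-free-convex-simple} is exactly what forces $n$ to be a prime power, whereas a KKM cover of a simplex has a rainbow point for every $n$; the price is that such an argument only certifies that each measure receives \emph{some} cell of value $\ge 1/n$, which is the proportional conclusion and not the envy-free one.

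First I would fix the configuration space $X$ of convex equipartitions of $\mu$ into $n$ labelled cells coming from generalized Voronoi diagrams. By the optimal transport / Alexandrov-type input behind \cite{Blagojevic:2014ey}, letting the sites of the diagram vary (modulo translations) the locus on which the cells have $\mu$-measure $1/n$ each is a space of the appropriate dimension; it carries the action of $\mathfrak{S}_n$ permuting sites and cells simultaneously, and, by construction, every $(C_1,\dots,C_n)\in X$ already satisfies $\mu(C_1)=\dots=\mu(C_n)$. I would then choose a model of (a compactification of) $X$ in which the strata where one degenerates the cells outside an index set $S\subseteq[n]$ play the role of the faces of a simplex, so that $X$ supports KKM/Sperner-type statements; much less of the $\mathfrak{S}_n$-symmetry is needed here than in the envy-free argument.

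Next, for each tuple index $r\in[d-1]$, each measure index $j\in[n]$ and each cell index $i\in[n]$, set
\[
F^r_{j,i} \;=\; \{\,(C_1,\dots,C_n)\in X : \mu^r_j(C_i)\ge 1/n\,\}.
\]
Each $F^r_{j,i}$ is closed, and for fixed $r,j$ one has $\bigcup_{i=1}^n F^r_{j,i}=X$ since $\sum_i \mu^r_j(C_i)=1$. On the stratum where the cells outside $S$ degenerate, the remaining cells carry almost all of the mass of $\mu^r_j$, so some cell indexed by $S$ already has $\mu^r_j$-value at least $1/|S|\ge 1/n$; this face-compatibility makes $\{F^r_{j,i}\}_{i\in[n]}$ a KKM-type cover for each fixed $r,j$. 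An iterated (or colourful) KKM argument --- carried out once for each of the $d-1$ tuples, each step consuming one $(n-1)$-dimensional direction of freedom inside $X$ --- then produces a single partition $(C_1,\dots,C_n)\in X$ together with, for every $r$, a permutation $\pi_r\colon[n]\to[n]$ with $(C_1,\dots,C_n)\in\bigcap_{j}F^r_{j,\pi_r(j)}$. Unwinding the definitions, $\mu^r_j(C_{\pi_r(j)})\ge 1/n$ for all $j$ and all $r$, so $(C_1,\dots,C_n)$ is a proportional partition for each tuple, while $\mu(C_1)=\dots=\mu(C_n)$ because we never left $X$. No hypothesis on $n$ is used.

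The step I expect to be the main obstacle is the interface between the configuration space of exact convex equipartitions of $\mu$, which a priori is non-compact and not literally a simplex, and the combinatorial KKM input: one must choose the parametrization and its compactification so that the families $\{F^r_{j,i}\}_i$ become genuine KKM covers of the relevant faces --- which means controlling how cells behave as they degenerate and, in particular, preventing a measure from being forced onto an empty cell --- and one must run the argument for all $d-1$ tuples at once, either through a single colourful KKM statement on $X$ or by an induction that peels off one tuple and one $(n-1)$-dimensional slice at a time. Verifying that this space of convex partitions has both the right dimension and a simplicial model with the needed face structure is the technical heart; the remaining points (closedness of the $F^r_{j,i}$, the pigeonhole covering, and unwinding the conclusion) are routine.
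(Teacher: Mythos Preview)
Your proposal has a genuine gap at exactly the point you flag as ``the main obstacle,'' and that obstacle is fatal rather than merely technical. The space $X=\EMP(\mu,n)$ carries no simplicial face structure of the kind a KKM argument needs: every partition in $X$ has $\mu(C_i)=1/n$ for all $i$, so no cell ever degenerates, and the ``strata where cells outside $S$ collapse'' simply do not exist inside $X$. If instead you try to build a simplex by fixing sites and varying the power-diagram weights (which is the only natural $\Delta^{n-1}$ around), you immediately leave $X$, because the equipartition constraint pins down the weights uniquely once the sites are chosen. Thus the covers $\{F^r_{j,i}\}_i$ you define are covers of $X$, but $X$ is not a simplex and has no compactification with the face-compatibility you assert.

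Even granting a miraculous simplicial model for one tuple, the mechanism for handling $d-1$ tuples at once --- ``consuming one $(n-1)$-dimensional direction'' per tuple --- is left entirely unspecified. The Blagojevi\'c--Ziegler theorem is precisely the statement that $\EMP(\mu,n)$ has enough equivariant topology to equalize $d-1$ functions, and that theorem genuinely fails when $n$ is not a prime power; a purely KKM-style replacement on the same space would be a major new result, not a routine adaptation.

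The paper's proof proceeds along completely different lines and does \emph{not} avoid the prime-power equivariant input. It argues by strong induction on $n$: for $n$ a prime power it quotes \cref{thm:envy-free-convex-simple}; for $n=ms$ with $m$ a prime power and $m,s<n$, it first invokes a strengthened prime-power statement (\cref{lem:coarsePrP}, itself proved via \cref{thm:BZ14} at the prime power $m$) to cut $\rr^d$ into $m$ convex pieces so that each tuple's $n$ measures are distributed $n/m$ per piece with proportional shares, and then recurses inside each piece on $s$ measures. So the equivariant obstruction theory is used as a black box at every level of the subdivision; what is dropped is only the envy-free conclusion, because the recursive step cannot coordinate preferences across different pieces.
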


We also present a generalization of \cref{thm:envy-free-convex-simple} in \cref{sec:remarks}, in which the preferences are not necessarily derived from measures.

\section{Preliminaries}
In this section we discuss two tools that will be needed in our proofs: KKM covers and power diagrams.

Consider the $(n-1)$-dimensional simplex $\Delta^{n-1} \subset \rr^d$ as the set
\[
\Delta^{n-1} = \{(x_1, \dots, x_n) : x_1 + \dots + x_n = 1 \quad \mbox{and } x_i \ge 0 \mbox{ for all }i \in [n]\}.
\]
Denote the vertices of $\Delta^{n-1}$ by the numbers $1,\dots, n$.  For a collection of vertices $I \subset [n]$, let $\sigma_I$ be the face spanned by those vertices.  A \textit{KKM cover} (named after Knaster, Kuratowski, and Mazurkiewicz \cite{Knaster:1929vi}) of $\Delta^{n-1}$ is an $n$-tuple of sets $(A_1, \dots, A_n)$ such that
\begin{itemize}
    \item each $A_i$ is a closed subset of $\Delta^{n-1}$ and
    \item for each $I \subset [n]$, the face $\sigma_I$ is contained in $\bigcup_{i \in I} A_i$.
\end{itemize}
The second condition implies that the sets $A_1, \dots, A_n$ cover $\Delta^{n-1}$, by taking $I=[n]$.  The first condition is many times replaced by all $A_i$ being open set.  KKM covers are commonly used to prove existence results for envy-free partitions and their duals (e.g. \cites{Su:1999es, Meunier2019, Asada:2018ix, Soberon2022} and the references therein).  The main result we will use is the following theorem:

\begin{theorem}[Asada, Frick, Psharody, Polevy, Stoner, Tsang, Wellner 2018 \cite{Asada:2018ix}]\label{thm:asada}
    Let $n$ be a positive integer and $(A^1_1, \dots, A^1_n), \dots, (A^{n-1}_1, \dots, A^{n-1}_n)$ be KKM covers of $\Delta^{n-1}$.  Then, there exists a point $x \in \Delta^{n-1}$ such that for any KKM cover $(A^n_1, \dots, A^n_n)$, there exists a permutation $\pi:[n] \to [n]$ such that
    \[
    x \in A^{i}_{\pi(i)} \mbox{ for each } i \in [n].
    \]
\end{theorem}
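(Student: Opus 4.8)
The plan is to first recast the statement in purely combinatorial terms. For $x\in\Delta^{n-1}$ write $I(x)=\{i\in[n]:x_i>0\}$ for its support, and let $G_x$ be the bipartite graph on vertex classes $[n-1]$ and $[n]$ with an edge $(r,j)$ whenever $x\in A^r_j$. I claim the theorem is equivalent to the assertion that there is a point $x\in\Delta^{n-1}$ such that for every $j_0\in I(x)$ the rows of $G_x$ admit a perfect matching into $[n]\setminus\{j_0\}$. The easy direction: given such $x$ and any KKM cover $(A^n_1,\dots,A^n_n)$, the inclusion $x\in\sigma_{I(x)}\subseteq\bigcup_{j\in I(x)}A^n_j$ yields some $j_0\in I(x)$ with $x\in A^n_{j_0}$, and a perfect matching of the rows of $G_x$ into $[n]\setminus\{j_0\}$ together with $n\mapsto j_0$ is the required permutation $\pi$. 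For the converse, one checks that for any $x$ and any $j_0\in I(x)$ there is a KKM cover of $\Delta^{n-1}$ with $\{j:x\in A^n_j\}=\{j_0\}$: take $A^n_{j_0}=\{y:y_{j_0}\ge\varepsilon\}$ for a small $\varepsilon\in(0,x_{j_0})$ and, for $j\neq j_0$, $A^n_j=\{y:y_{j_0}\le\varepsilon\text{ and }y_j\ge y_k\text{ for all }k\neq j_0\}$; such a cover forces $\pi(n)=j_0$. Finally, by Hall's marriage theorem the matching condition is equivalent to
\[
\Big|\bigcup_{r\in S}\{j:x\in A^r_j\}\Big|\ \ge\ |S|+1\qquad\text{for every nonempty }S\subseteq[n-1],
\]
i.e. the $n-1$ nonempty sets $\{j:x\in A^r_j\}$ possess a system of distinct representatives that can avoid any one prescribed element of $[n]$.

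\textbf{Step 2: Producing the point.} To find $x$, I would discretize: fix triangulations $T_m$ of $\Delta^{n-1}$ with mesh tending to $0$. For a vertex $v$ of $T_m$ in the relative interior of the face $\sigma_{I(v)}$, the KKM property of the $r$-th cover gives at least one index $\ell_r(v)\in I(v)$ with $v\in A^r_{\ell_r(v)}$; choosing one such index yields $n-1$ Sperner-type labelings $\ell_1,\dots,\ell_{n-1}$ of $T_m$. The goal is a combinatorial lemma guaranteeing, for each $m$, a simplex $\tau_m\in T_m$ whose vertex labels already witness the Hall inequality above (after deleting any single $j_0$, the labels $\{\ell_r(v):v\in\tau_m,\ r\in S\}$ cover at least $|S|$ elements, or the relevant deficiency vanishes as $m\to\infty$). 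Passing to a convergent subsequence of barycenters of the $\tau_m$ and using that each $A^r_j$ is closed then produces a point $x$ satisfying the condition of Step 1. One also disposes of the boundary issue — that $I(x)$ may be small — either by arranging for an interior $x$, for which it suffices to verify the Hall condition for \emph{all} $j_0\in[n]$, or by carrying supports along the limit.

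\textbf{The main obstacle.} The hard part is the combinatorial lemma of Step 2. An ordinary Sperner/KKM argument, even a colorful one, delivers only a single rainbow simplex — equivalently a point $x$ carrying \emph{one} system of distinct representatives for $(\{j:x\in A^r_j\})_r$ — but not the robustness that allows the system to avoid an \emph{arbitrary} prescribed column. Obtaining that extra slack seems to require exploiting the symmetry among the $n$ choices of deleted column: a natural route is an equivariant-topology input (a degree computation, or a Tucker/Ky Fan–type lemma carrying a $\zz/n$- or $S_n$-action) showing that the configurations admitting a Hall obstruction $S$ after deleting some $j_0\in I(x)$ cannot cover $\Delta^{n-1}$; an alternative is a path-following (``zig-zag'') argument tracking how the matchable set of columns evolves as the deleted column varies. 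Once the lemma is available, the rest is the elementary reduction of Step 1 together with the routine compactness-and-closedness passage of Step 2.
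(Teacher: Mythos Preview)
The paper does not prove \cref{thm:asada}; it is quoted from \cite{Asada:2018ix} and used as a black box, so there is no in-paper argument to compare your proposal against.

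On the merits of your outline: Step~1 is a clean and correct reduction. The equivalence you state is right, and your explicit construction of a KKM cover $(A^n_1,\dots,A^n_n)$ forcing $\pi(n)=j_0$ for a prescribed $j_0\in I(x)$ is valid (each $A^n_j$ is closed, and the face condition checks out whether or not $j_0\in I$). Translating the matching requirement into a Hall-type inequality is the natural bridge to a discrete approach.

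The genuine gap is exactly what you flag yourself: Step~2 contains no proof. You propose to find, in finer and finer triangulations, a simplex whose multi-labels satisfy the strengthened Hall condition (a system of distinct representatives avoiding any prescribed column), but you neither state a precise lemma nor indicate why such simplices must exist. Ordinary colorful Sperner/KKM gives only one SDR, not an SDR-avoiding-any-$j_0$, and the two candidate mechanisms you list (an equivariant degree computation, or a path-following argument) are left as possibilities rather than carried out. This is precisely the nontrivial content of \cite{Asada:2018ix}: their proof proceeds by a degree/parity argument on Sperner-type labelings (showing that the relevant count of ``good'' simplices has fixed parity independent of the labelings, hence is nonzero), which is what supplies the extra slack you are missing. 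As written, your proposal reduces the theorem to an unproved combinatorial lemma of essentially the same strength as the theorem itself.
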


This result generalizes the classic ``colorful'' KKM theorem of Gale \cite{Gale1984}, in which $(A^n_1, \dots, A^n_n)$ is also given in advance.

A particular space of partitions of $\rr^d$ we are interested in are \textit{power diagrams}, also known as \textit{generalized Voronoi diagrams}.  Given $n$ distinct points $x_1, \dots, x_n$ in $\rr^d$ (referred to as ``sites'') and $n$ real numbers $\lambda_1, \dots, \lambda_n$ (referred to as ``weights''), this induces a partition of $\rr^d$ into $n$ sets $C_1, \dots, C_n$ defined by

\[
C_j = \{y \in \rr^d : \operatorname{dist}^2(y,x_j) - \lambda_j \le \operatorname{dist}^2(y,x_i) - \lambda_i \mbox{ for all }i\in [n]\}.
\]

These regions are convex.  Since adding the same constant to all weights does not change the partition, we may assume that $\sum_{i=1}^n \lambda_i = 0$ for simplicity.  The case $\lambda_1 = \dots = \lambda_n = 0$ corresponds to a classic Voronoi diagram.  

Given an absolutely continuous probability measure $\mu \in \rr^d$ and $n$ sites $x_1, \dots, x_n$, there exist weights $\lambda_1, \dots, \lambda_n$ such that $\mu(C_i) = 1/n$, as first shown by Aurenhammer, Hoffman, and Aronov \cite{Aurenhammer:1998tj}.  Moreover, if $\mu(U) \neq 0$ for any open set $U$, then this set of weights is unique and varies continuously with $(x_1, \dots, x_n)$.  Denote by $\EMP(\mu, n)$ the space of ``equal measure partitions'' of $\mu$ by power diagrams into $n$ parts. Then, this allows us to parametrize $\EMP(\mu, n)$ by the set of $n$-tuples of $n$ distinct sites in $\rr^d$. The space $\EMP(\mu, n)$ also has a natural action of the symmetric group $S_n$ of permutations on $[n]$.  Given $P=(C_1,\dots, C_n)$ and $\pi \in S_n$, we declare $\pi P = (C_{\pi(1)}, \dots, C_{\pi(n)})$.

A function $F: \EMP(\mu, \rr^n) \to \rr^n$ can be expanded as $F=(f_1,\dots, f_n)$, where each component is a function $f_i:\EMP(\mu, \rr^n) \to \rr$.  We say that $F$ is \textit{$S_n$-equivariant} if $f_i(\pi(P)) = f_{\pi(i)}(P)$ for all $i\in [n]$ and all $\pi\in S_n$.  A simple way to generate such a function is to have each $f_i$ compute some function of $C_i$, such as $\operatorname{\mu}(C_i)$ for some measure $\mu$.  

The main result we use about $S_n$-equivariant functions on $\EMP(\mu,n)$ is the following theorem of Blagojevi\'c and Ziegler.

\begin{theorem}[Blagojevi\'c, Ziegler 2014 \cite{Blagojevic:2014ey}]\label{thm:BZ14}
    Let $\mu$ be an absolutely continuous probability measure on $\rr^d$.  If $n$ is a prime power, then for any $d-1$ continuous $S_n$-equivariant functions $F_r=(f_{1,r},\dots,f_{n,r})$ on $\EMP(\mu, n)$, there exists a partition $(C_1,\dots, C_n)\in \EMP(\mu, n)$ that simultaneously equalizes the components of each $F_r$.  In other words,
\[
f_{i,r}(C_1,\dots, C_n) = f_{i',r}(C_1,\dots, C_n) \qquad \mbox{for all } i,i' \in [n], r \in [d-1].
\]
\end{theorem}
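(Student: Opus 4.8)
The plan is to set the problem up through the Configuration Space/Test Map scheme and to resolve it with an equivariant Euler-class computation, for which the prime-power hypothesis is indispensable. First I would use the parametrization of $\EMP(\mu, n)$ from the preliminaries. By the theorem of Aurenhammer, Hoffman, and Aronov, every choice of $n$ distinct sites determines a unique weight vector making all power-diagram cells have $\mu$-measure $1/n$, varying continuously with the sites. This gives an $S_n$-equivariant homeomorphism between $\EMP(\mu, n)$ and the configuration space
\[ F(\rr^d, n) = \{(x_1, \dots, x_n) \in (\rr^d)^n : x_i \neq x_j \text{ for } i \neq j\}, \]
where $S_n$ permutes the sites. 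Let $W_n = \{y \in \rr^n : \sum_i y_i = 0\}$ be the standard representation of $S_n$. For each $r \in [d-1]$, composing $F_r$ with the orthogonal projection $\rr^n \to W_n$ yields an $S_n$-equivariant map $\bar F_r$, and assembling them gives a single $S_n$-equivariant test map
\[ G = (\bar F_1, \dots, \bar F_{d-1}) : F(\rr^d, n) \longrightarrow W_n^{\oplus(d-1)}. \]
A point lies in the zero set of $G$ exactly when each $(f_{1,r}, \dots, f_{n,r})$ is constant, i.e.\ when all components of every $F_r$ agree, so it suffices to produce a zero of $G$.

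Suppose for contradiction that $G$ is nowhere zero. Normalizing then produces an $S_n$-equivariant map $F(\rr^d, n) \to S(W_n^{\oplus(d-1)})$ into the unit sphere of the representation $W_n^{\oplus(d-1)}$. To rule this out I would invoke the prime-power hypothesis: writing $n = p^k$, embed the elementary abelian group $E = (\zz/p)^k$ into $S_n$ through its free translation action on $[n] \cong (\zz/p)^k$. This action is fixed-point free on indices, so $E$ acts freely on $F(\rr^d, n)$, and as an $E$-representation $W_n$ is the reduced regular representation $\bigoplus_{\chi \neq 1} \chi$, the sum of all nontrivial characters of $E$. It thus suffices to show that no $E$-equivariant map $F(\rr^d, n) \to S(W_n^{\oplus(d-1)})$ exists. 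Working with $E$ rather than $S_n$ is what makes a clean cohomological comparison available, and the index framework below needs no compactness of $F(\rr^d, n)$.

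For this I would compare Fadell--Husseini indices with $\mathbb{F}_p$-coefficients. Writing $\operatorname{Ind}_E X = \ker\!\big(H^*(BE; \mathbb{F}_p) \to H^*_E(X; \mathbb{F}_p)\big)$, an $E$-equivariant map forces $\operatorname{Ind}_E S(W_n^{\oplus(d-1)}) \subseteq \operatorname{Ind}_E F(\rr^d, n)$. Since the sphere is free, its index contains the Euler class $e(W_n^{\oplus(d-1)}) = e(W_n)^{d-1}$, where $e(W_n) = \prod_{\chi \neq 1} e(\chi)$ is the product of the Euler classes of the nontrivial characters, a top Dickson-type class in $H^*(BE; \mathbb{F}_p)$. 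The contradiction follows once this power is shown to be nonzero in $H^*_E(F(\rr^d, n); \mathbb{F}_p)$, i.e.\ to lie outside $\operatorname{Ind}_E F(\rr^d, n)$.

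I expect this last nonvanishing to be the main obstacle. It rests on the explicit structure of the equivariant cohomology of the configuration space, via the Cohen--Taylor/Fadell--Husseini description of $H^*_E(F(\rr^d, n))$ as a module over $H^*(BE)$, and on checking that the relevant monomial survives in total degree $(n-1)(d-1)$, which lies strictly below $\dim F(\rr^d, n) = dn$. This is exactly the step that uses primality, through the fact that the product of all nontrivial linear characters is nonzero in $H^*(B(\zz/p)^k)$; the argument genuinely breaks for general $n$, matching the remark in the introduction that the prime-power restriction is essential here.
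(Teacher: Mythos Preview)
The paper does not contain a proof of this theorem. It is quoted from Blagojevi\'c and Ziegler \cite{Blagojevic:2014ey} as a tool and applied as a black box in the proofs of \cref{thm:envy-free-convex-simple} and \cref{lem:coarsePrP}. There is therefore nothing in the present paper to compare your proposal against.

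For what it is worth, your outline follows the original Blagojevi\'c--Ziegler argument faithfully: the configuration-space/test-map reduction to an $S_n$-equivariant map $F(\rr^d,n)\to W_n^{\oplus(d-1)}$, the passage to the regular elementary abelian $p$-subgroup $E\cong(\zz/p)^k$ when $n=p^k$, and the Fadell--Husseini index comparison are precisely the ingredients they use. You also correctly identify the crux, namely that $e(W_n)^{d-1}\notin\operatorname{Ind}_E F(\rr^d,n)$; this is the computation that occupies most of their paper and is where the prime-power hypothesis is genuinely needed. One small caveat: the continuous parametrization of $\EMP(\mu,n)$ by $F(\rr^d,n)$ via the Aurenhammer--Hoffmann--Aronov weights, as stated in the preliminaries, requires $\mu$ to be positive on every open set, so for a general absolutely continuous $\mu$ a standard approximation step is implicit.
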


In our application of this result, we will use functions where each $f_{i,r}$ depends on the entire partition $(C_1,\dots,C_n)$ and not just $C_i$.

Given an absolutely continuous probability measure $\mu$ on $\rr^d$, the space $X(\mu,n)$ of convex equipartitions $(C_1,\dots, C_n)$ of $\mu$ is compact.  To see this, consider a ball $B$ such that $\mu(B) > (n-1)/n$.  Then, for any convex equipartition $(C_1, \dots, C_n)$, the hyperplane $H_{ij}$ that separates $C_i$ and $C_j$ (which can be chosen canonically even if $C_i$ and $C_j$ do not share a boundary) must intersect $B$.  Let $K$ be the set of hyperplanes that intersect $B$.  Therefore, we can represent $X$ as a closed subset of
\[
\underbrace{K \times K \times \dots \times K}_{\binom{n}{2} \mbox{ times}}.
\]

Additionally, $X$ inherits a metric space structure from this representation.  This will be useful in \cref{sec:remarks}.

%Intuitively, they way these results relate to envy-free partitions is that each $KKM$ cover will correspond to the preferences of one person, were $A^j_i$ denotes represents a set of partitions 

\section{Results parametrized by simplices}

We start by proving \cref{thm:R2-two-lines}.  The parametrization below is based on recent work by McGinis and Zerbib for problems related to line transversals to finite families of convex sets \cite{McGinnis2022} (see also the extensions by G\'omez-Navarro and Rold\'an-Pensado \cite{GomezNavarro2023}).

\begin{proof}[Proof of \cref{thm:R2-two-lines}]
    First, we parametrize the space of partitions of $\rr^2$ using two lines by $\Delta^3$.  Given $(x_1, x_2, x_3, x_4) \in \Delta^3$, consider the following points in the unit circle $S^1$:
    \begin{align*}
    p_0 & = (1,0), \\ p_1 & = \Big(\cos (2\pi x_1), \sin (2\pi x_1)\Big),\\ p_2 & = \Big(\cos (2\pi (x_1+x_2), \sin (2\pi (x_1+x_2)\Big) , \\    p_3 & = \Big(\cos (2\pi (x_1+x_2+x_3), \sin (2\pi (x_1+x_2+x_3)\Big).
    \end{align*}

Note that the length of the arc between $p_{i-1}$ and $p_i$ (indices modulo $4$) is exactly $2\pi x_i$.  Let $B^2$ be the disk bounded by $S^1$.  We construct the lines $p_0p_2$ and $p_1p_3$, which divide the disk into four closed regions.  Denote by $C_i$ the region of the disk that contains the arc $p_{i-1}p_i$.  We first prove the result for measures whose support is contained in $B^2$.

Given an absolutely continuous probability measure $\mu$ whose support is contained in $B^2$, we construct a cover of $\Delta^3$ as following.  Given $x=(x_1, x_2, x_3, x_4) \in \Delta^3$, construct $C_1, C_2, C_3, C_4$ as above.  We say $x \in A_i$ if $\mu(C_i) \ge \mu (C_j)$ for all $j \in [4]$.  Since the condition involves checking a finite number of non-strict inequalities, each $A_i$ is a closed set.  Moreover, if $x_i=0$, the set $C_i$ is the single point $p_i$, so $\mu(C_i) = 0$.  This implies that $(A_1, A_2, A_3, A_4)$ is a KKM cover of $\Delta^3$.  Then the four measures induce four corresponding KKM covers of $\Delta^3$.  A direct application of \cref{thm:asada} finishes the proof.

Now consider the case of general measures and let $\varepsilon>0$.  We may assume, without loss of generality, that for each measure $\mu_1, \dots, \mu_4$ we have $\mu_i (B^2) \ge 1-\varepsilon$ by, say, scaling the plane or the disk.  We can apply the result above and find two lines generating an envy-free partition for the restriction of the measures to $B^2$.  We call this a partition that is $\varepsilon$-fair, as the piece assigned to each $\mu_i$ needs at most $\varepsilon$ measure to be the maximal part.  Observe that the space of pairs of lines that could represent a $\delta$-fair partition for the four measures for any $\delta \le 1/2$ is compact.  A simple compactness argument by taking $\varepsilon \to 0$ gives us an envy-free partition as desired.

\begin{figure}
    \centering
    \includegraphics[scale=0.7]{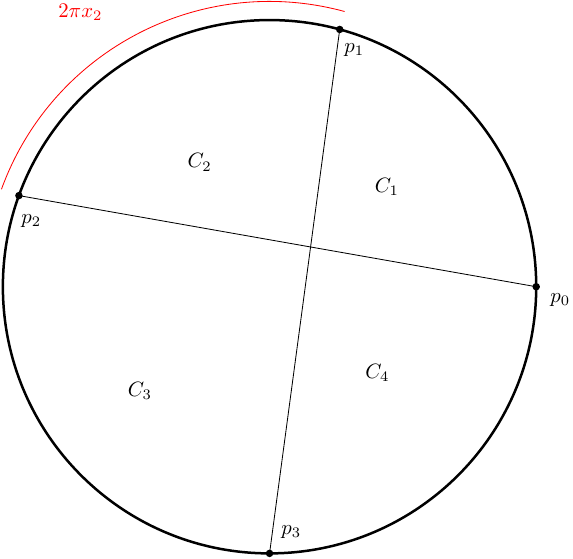}
    \caption{construction of the two lines using $(x_1, x_2, x_3, x_4)$}
    \label{fig:twolines}
\end{figure}
    
\end{proof}

In general, the technique above works for a wide range of partition spaces parametrized by a simplex, which we denote as \textit{$\Delta$-spaces}.

\begin{definition}
    Let $n, d$ be positive integers.  We say that a space of of partitions $\mathcal{H}$ of $\rr^d$ into $n$ parts is a $\Delta$-space if there exists a function $R: \Delta^{n-1}\to \mathcal{H}$ such that the following holds:
    \begin{itemize}
        \item If $R(x_1,\dots,x_n) = (C_1, \dots, C_n)$, then $x_i = 0$ implies that $C_i$ has Lebesgue measure zero for all $i \in [n]$, and
        \item if $\mu$ is a finite measure absolutely continuous with respect to the Lebesgue measure, then $(x_1, \dots, x_n) \mapsto \mu(C_j)$ is a continuous function from $\Delta^{n-1}$ to $\rr$ for each $j \in [n]$.
    \end{itemize}
\end{definition}

Now, given two $\Delta$-spaces of partitions of $\rr^d$, there is a simple way to combine them.  Let $\mathcal{A}$ be a $\Delta$-space of partitions of $\rr^d$ into $n$ pieces, let $\mathcal{B}$ be a $\Delta$-space of partitions of $\rr^d$ into $m$ pieces, and let $v$ be a direction in $S^{d-1}$.  Using these parameters, we can construct a $\Delta$-space of partitions $\mathcal{A} *_v \mathcal{B}$ of $\rr^d$ into $n+m$ pieces.

Given $\beta \in \rr$ and $v \in [0,1]$, consider the half-spaces
\begin{align*}
    H^+(\beta,v) = \{y \in \rr^d: \langle y, v \rangle \ge \beta \} \\
    H^-(\beta,v) = \{y \in \rr^d: \langle y, v \rangle \le \beta \}
\end{align*}

We include the limiting cases $H^+(\infty, v) = H^- (-\infty, v) = \emptyset$ and $H^-(\infty, v) = H^+(-\infty, v) = \rr^d$.

Now, given partitions $(C_1,\dots, C_n) \in \mathcal{A}$, $(D_1, \dots, D_m) \in \mathcal{B}$ and $\alpha \in [0,1]$, we form a new partition with $n+m$ sets, of the form
\[
C'_j = C_j \cap H^+ \left(\frac{2\alpha-1}{1-|2\alpha-1|}, v\right), j \in [n] \qquad D'_k=D_k \cap H^- \left(\frac{2\alpha-1}{1-|2\alpha-1|},v\right), k \in [m].
\]

The construction is illustrated in \cref{fig:join-partition}.  Some parts may be empty, which is not a problem.  Note that as $\alpha \to 0$, we recover only the partition in $\mathcal{A}$ and $m$ empty pieces for $\mathcal{B}$, and as $\alpha \to 1$ we recover only the partition of $\mathcal{B}$ and $n$ empty pieces for $\mathcal{A}$.  This means that if $\mathcal{A}$ and $\mathcal{B}$ have parametrizations of $\Delta^{n-1}$ and $\Delta^{m-1}$, respectively, then $\mathcal{A} *_v \mathcal{B}$ can be parametrized by the join $\Delta^{n-1}*\Delta^{m-1} = \Delta^{n+m-1}$.  Hence $\mathcal{A}*_v \mathcal{B}$ is also a $\Delta$-space, as pieces that must have Lebesgue measure zero in the construction preserve this property.  This method of combining spaces of partitions has been used before to obtain high-dimensional generalizations of the necklace splitting theorem.

\begin{figure}
    \centering
    \includegraphics{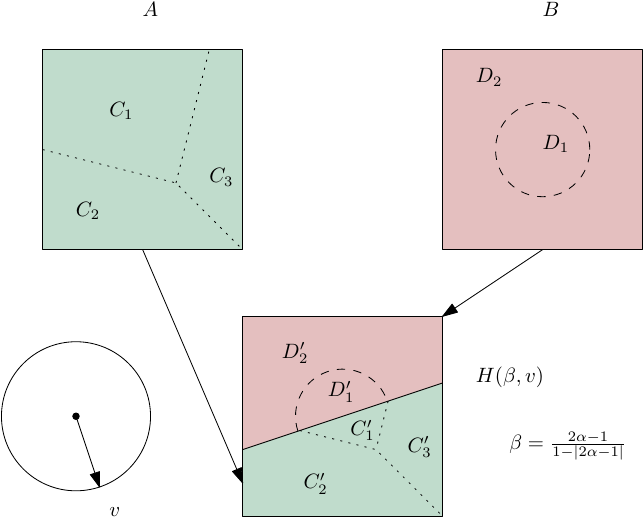}
    \caption{A construction of a partition of $\rr^2$ into five parts.}
    \label{fig:join-partition}
\end{figure}

A simple example of a $\Delta$-space is to simply take $n=1$ and the unique partition $\{\rr^d\}$.  Then, using the join operation repeatedly with potentially different directions $v$, we obtain the space of \textit{nested hyperplane partitions} with fixed directions.  Alternatively, we can define the same nested hyperplane partition space of $\rr^d$ into $n$ parts recursively.  For $n=1$, it is simply the partition $\{\rr^d\}$.  For $n>1$, assume we have a space $\mathcal{A}$ of nested hyperplane partitions into $n-1$ parts, a direction $v \in S^{d-1}$, and a value $j \in [n-1]$.  Given a partition $(C_1, \dots, C_{n-1})\in \mathcal{A}$, we simply cut $C_j$ into two parts $C_j'$ and $C_j''$ using a hyperplane orthogonal to $v$ (we always cut the $j$-th piece).  This gives us a partition of $\rr^d$ into $n$ pieces, as desired.  Note that the hyperplanes involved in the cutting have fixed directions, and by recursion the parts are convex. 

For any space of nested hyperplane partitions into $n$ pieces for $n\ge 2$, there exists $1\le k \le n-1$ such that one side of the first hyperplane cut is a nested hyperplane partition into $k$ parts and the other side is a nested hyperplane partition into $n-k$ parts.  Therefore, by strong induction we can parametrize this space of partitions by $\Delta^{k-1}*\Delta^{n-k-1} \cong \Delta^{n-1}$.

This gives us the following consequence of \cref{thm:asada}.  Note that it implies a generalization of \cref{thm:R2-two-lines}.

\begin{theorem}\label{thm:delta-spaces}
    Let $\mathcal{B}$ be a $\Delta$-space of partitions of $\rr^d$ into $n$ parts.  Let $\mu_1, \dots, \mu_n$ be absolutely continuous probability measures on $\rr^d$.  Then, there is a partition in $\mathcal{B}$ that is an envy-free partition for $\mu_1,\dots, \mu_n$.  The partition can be found even if we don't have access to $\mu_n$ in advance.
\end{theorem}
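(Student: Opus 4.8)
The plan is to mimic the proof of \cref{thm:R2-two-lines}, replacing the explicit two-line parametrization of $\Delta^3$ by the abstract map $R:\Delta^{n-1}\to\mathcal{B}$ guaranteed by the definition of a $\Delta$-space, and then invoking \cref{thm:asada}. Write $R(x) = (C_1(x),\dots,C_n(x))$ for $x = (x_1,\dots,x_n)\in\Delta^{n-1}$.

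For each $i\in[n]$, define a tuple $(A^i_1,\dots,A^i_n)$ of subsets of $\Delta^{n-1}$ by declaring $x\in A^i_j$ whenever $\mu_i(C_j(x))\ge\mu_i(C_{j'}(x))$ for all $j'\in[n]$; that is, $A^i_j$ collects the parameters for which the $j$-th piece is $\mu_i$-maximal. I would first check that each of these tuples is a KKM cover of $\Delta^{n-1}$. Closedness of $A^i_j$ follows from the second bullet of the definition of a $\Delta$-space: each $x\mapsto\mu_i(C_{j'}(x))$ is continuous, and $A^i_j$ is cut out by finitely many non-strict inequalities among these functions. For the face condition, fix $I\subseteq[n]$ and $x\in\sigma_I$, so $x_j=0$ for $j\notin I$. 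The first bullet of the definition gives that $C_j(x)$ has Lebesgue measure zero, hence $\mu_i(C_j(x))=0$ by absolute continuity, for every $j\notin I$. Since the $C_j(x)$ have pairwise disjoint interiors and cover $\rr^d$ while their boundaries are Lebesgue-null, we have $\sum_{j\in[n]}\mu_i(C_j(x)) = 1$, so the maximum of $\mu_i(C_{j'}(x))$ over $j'$ is attained at some index in $I$; that index witnesses $x\in\bigcup_{j\in I}A^i_j$. Hence $\sigma_I\subseteq\bigcup_{j\in I}A^i_j$, as required.

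Now apply \cref{thm:asada} to the $n-1$ KKM covers $(A^1_1,\dots,A^1_n),\dots,(A^{n-1}_1,\dots,A^{n-1}_n)$ coming from $\mu_1,\dots,\mu_{n-1}$. This produces a point $x\in\Delta^{n-1}$ --- and hence a partition $(C_1,\dots,C_n) = R(x)\in\mathcal{B}$ depending only on $\mu_1,\dots,\mu_{n-1}$ --- with the property that for every KKM cover $(A^n_1,\dots,A^n_n)$ there is a permutation $\pi:[n]\to[n]$ with $x\in A^i_{\pi(i)}$ for all $i\in[n]$. Feeding in the KKM cover built from $\mu_n$ by the same recipe, we obtain a permutation $\pi$ such that $\mu_i(C_{\pi(i)})\ge\mu_i(C_{\pi(i')})$ for all $i,i'\in[n]$; that is, $(C_1,\dots,C_n)$ is an envy-free partition for $\mu_1,\dots,\mu_n$. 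Since $x$ was fixed before $\mu_n$ was used, the partition itself does not depend on $\mu_n$, which is exactly the secretive strengthening in the statement.

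I do not anticipate a serious obstacle here once the $\Delta$-space formalism is in place: \cref{thm:asada} carries out all of the topological work, and the only point requiring care is the KKM verification, where the interplay between ``$x_i=0$ forces a Lebesgue-null piece'' and ``absolute continuity forces that piece to be $\mu_i$-null'' is precisely what makes the face condition hold. In particular, unlike the proof of \cref{thm:R2-two-lines}, no separate $\varepsilon\to 0$ compactness argument is needed, since the definition of a $\Delta$-space already packages the required continuity of $x\mapsto\mu(C_j(x))$ on all of $\Delta^{n-1}$.
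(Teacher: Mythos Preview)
Your proof is correct and follows essentially the same approach as the paper: build a KKM cover of $\Delta^{n-1}$ from each measure via the map $R$, using the Lebesgue-null condition of a $\Delta$-space together with absolute continuity to verify the face condition, and then apply \cref{thm:asada}. Your write-up is in fact more explicit than the paper's on the KKM verification and on why no limiting argument is needed.
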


\begin{proof}
    For each $\mu_j$, we define a KKM cover $(A^j_1,\dots, A^j_n)$ of $\Delta^{n-1}$.  First, we use the function $R:\Delta^{n-1}\to \mathcal{B}$ from the definition of a $\Delta$-space.  Then, we define $A^j_i$ to be the set of points $(x_1,\dots, x_n) \in \Delta^{n-1}$ such that for $(C_1,\dots,C_n) = R(x_1,\dots,x_n)$, we have that $C_i$ has the maximal $\mu_j$-measure among all $C_i$.  The condition of Lebesgue measure zero implies that $A^j_i$ does not intersect the facet opposite to vertex $i$.  Since each $A^j_i$ is closed, this is a KKM cover of $\Delta^{n-1}$.  A direct application of \cref{thm:asada} finishes the proof.
\end{proof}

If we apply the result for nested hyperplane partitions, we get the following corollary.

\begin{corollary}
    Let $d, n \ge 1$ and let $A$ be a space of nested hyperplane partitions of $\rr^d$ with fixed directions.  Then, for any $n$ absolutely continuous measures, there exists an envy-free partition in $A$ for the $n$ measures.  This partition can be found even if we don't have access to one of the measures in advance.
\end{corollary}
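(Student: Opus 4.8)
The plan is to derive this as an immediate corollary of \cref{thm:delta-spaces}, so the only thing that needs checking is that a space $A$ of nested hyperplane partitions of $\rr^d$ with fixed directions is a $\Delta$-space. This was in fact already set up in the discussion preceding the statement: using the recursive structure of nested hyperplane partitions, the first hyperplane cut splits $A$ into a nested hyperplane partition with $k$ parts on one side and $n-k$ parts on the other, for some $1 \le k \le n-1$, and the join operation $\mathcal{A} *_v \mathcal{B}$ — applied inductively — yields a parametrization $R : \Delta^{k-1} * \Delta^{n-k-1} \cong \Delta^{n-1} \to A$. So I would begin by recalling this parametrization explicitly.

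Then I would spell out that $R$ satisfies the two defining properties of a $\Delta$-space, by induction on $n$. For the vanishing condition, if a coordinate $x_i$ of a point of $\Delta^{n-1}$ equals $0$, then $x_i$ lies either in the $\Delta^{k-1}$-block or the $\Delta^{n-k-1}$-block of the join; in the first case the corresponding region is of the form $C_i \cap H^+(\,\cdot\,,v)$, where by the inductive hypothesis $C_i$ already has Lebesgue measure zero (or the half-space factor is empty at the relevant boundary of the join parameter), and symmetrically in the second case — so the region indexed by $i$ is Lebesgue-null. For the continuity condition, each region of $R(x_1,\dots,x_n)$ is a finite intersection of half-spaces whose bounding hyperplanes move continuously with $(x_1,\dots,x_n)$, and since $\mu$ is absolutely continuous these hyperplanes are $\mu$-null, so $\mu$ of each region depends continuously on the parameters. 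This is exactly the argument already carried out when defining the join of two $\Delta$-spaces, now invoked in full.

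Finally, with $A$ confirmed to be a $\Delta$-space of partitions of $\rr^d$ into $n$ parts, the conclusion follows by applying \cref{thm:delta-spaces} with $\mathcal{B} = A$: it produces a partition in $A$ that is envy-free for the given measures, and that theorem already asserts the partition can be obtained without access to one of them in advance (take the unknown measure to play the role of $\mu_n$ after reindexing). Since envy-freeness is unaffected by scaling, we may first normalize each measure to a probability measure, so the statement for arbitrary (finite) absolutely continuous measures follows as well. I do not expect a genuine obstacle here: the substance of the argument lies in \cref{thm:delta-spaces} and in the earlier verification that the join of $\Delta$-spaces is again a $\Delta$-space, and the only point requiring a little care is the inductive bookkeeping confirming that a zero simplex coordinate kills the matching part through the join.
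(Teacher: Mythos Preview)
Your proposal is correct and follows exactly the paper's approach: the paper presents this corollary as a direct application of \cref{thm:delta-spaces} to nested hyperplane partitions, having already established in the preceding discussion that such spaces are $\Delta$-spaces via the join construction. Your write-up simply spells out in more detail the inductive verification of the $\Delta$-space axioms and the normalization to probability measures, which the paper leaves implicit.
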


Although power diagrams with fixed sites are not $\Delta$-spaces since they do not meet the Lebesgue measure zero condition, they satisfy a similar condition for finite measures.  Let $(y_1,\dots,y_n)$ be $n$ distinct points in $\rr^d$.  For an $n$-tuple $(\alpha_1,\dots, \alpha_n)$ of real numbers with sum equal to zero, consider $(C_1,\dots, C_n)$ the power diagram with sites $(y_1,\dots, y_n)$ and weights $(\alpha_1,\dots, \alpha_n)$ with zero sum.  Given any absolutely continuous probability measure $\mu$ on $\rr^d$ and $\varepsilon>0$, there exists a real number $M=M(\mu,\varepsilon)$ such that $\alpha_i < M$ implies $\mu(C_i)<\varepsilon$ for all $i\in[n]$.  The set
\[
\left\{ (\alpha_1,\dots, \alpha_n) : \sum_{i=1}^n\alpha_i = 0, \quad \alpha_i \ge M \mbox{ for all\;} i \in [n]\right\}
\]
is a rescaling of $\Delta^{n-1}$.  By choosing $\varepsilon$ appropriately (in this case, any $0 < \varepsilon < 1/n$), we can include power diagrams in \cref{thm:delta-spaces}.  In other words, we have the following corollary.

\begin{corollary}
Let $n, d$ be positive integers.  Given $n$ absolutely continuous probability measures in $\rr^d$ and a fixed set of $n$ sites, there exist weights such that the corresponding power diagram gives an envy-free partition for the $n$ measures.
\end{corollary}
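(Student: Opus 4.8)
\emph{Proof proposal.} The plan is to run the argument of \cref{thm:delta-spaces}, replacing the Lebesgue-measure-zero condition of a $\Delta$-space by the weaker decay estimate for power diagrams recorded just above; the statement is therefore not a literal corollary of \cref{thm:delta-spaces} but a close variant of it. Fix the $n$ distinct sites $y_1,\dots,y_n$, choose any $\varepsilon$ with $0<\varepsilon<1/n$, and using the decay estimate pick a constant $M<0$ so small that for every $j\in[n]$ and every $i\in[n]$ the inequality $\alpha_i\le M$ forces $\mu_j(C_i)<\varepsilon$, where $(C_1,\dots,C_n)$ denotes the power diagram with sites $y_1,\dots,y_n$ and zero-sum weights $(\alpha_1,\dots,\alpha_n)$. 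The weight vectors we use form the affine simplex
\[
S=\Bigl\{(\alpha_1,\dots,\alpha_n):\textstyle\sum_{i=1}^n\alpha_i=0,\ \alpha_i\ge M\text{ for all }i\in[n]\Bigr\}\;\cong\;\Delta^{n-1},
\]
under an affine homeomorphism sending vertex $i$ of $\Delta^{n-1}$ to the vertex of $S$ with $\alpha_k=M$ for $k\ne i$, so that the facet of $\Delta^{n-1}$ opposite vertex $i$ corresponds exactly to $\{\alpha\in S:\alpha_i=M\}$. Write $R:\Delta^{n-1}\to S$ for this parametrization followed by passage to the associated power diagram $(C_1,\dots,C_n)$.

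Next I would form, for each $j\in[n]$, a closed cover $(A^j_1,\dots,A^j_n)$ of $\Delta^{n-1}$ by putting $x\in A^j_i$ exactly when the $i$-th cell of $R(x)$ has maximal $\mu_j$-measure among all $n$ cells. These $n$ sets cover $\Delta^{n-1}$ because some cell always attains the maximum, and each $A^j_i$ is closed because the map $x\mapsto\bigl(\mu_j(C_1),\dots,\mu_j(C_n)\bigr)$ is continuous --- the cells of a power diagram with fixed distinct sites move continuously with the weights, and $\mu_j$ is absolutely continuous --- and membership in $A^j_i$ is cut out by finitely many non-strict inequalities among these coordinates. For the KKM boundary condition, observe that if $x$ lies on the facet of $\Delta^{n-1}$ opposite vertex $i$ then $\alpha_i=M$, hence $\mu_j(C_i)<\varepsilon<1/n$; since $\sum_k\mu_j(C_k)=1$ some cell has $\mu_j$-measure at least $1/n$, so $C_i$ is not maximal and $x\notin A^j_i$. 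Thus each $A^j_i$ avoids the facet opposite vertex $i$, which together with the covering property is precisely the KKM condition, so $(A^j_1,\dots,A^j_n)$ is a KKM cover of $\Delta^{n-1}$.

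Finally I would feed the $n$ KKM covers $(A^j_1,\dots,A^j_n)$, $j\in[n]$, into \cref{thm:asada}, obtaining a point $x\in\Delta^{n-1}$ and a permutation $\pi:[n]\to[n]$ with $x\in A^{j}_{\pi(j)}$ for all $j$. Unwinding the definitions, the power diagram $(C_1,\dots,C_n)=R(x)$ --- which has the prescribed sites $y_1,\dots,y_n$ and whose weights lie in $S$, hence have zero sum --- satisfies $\mu_j(C_{\pi(j)})\ge\mu_j(C_{\pi(j')})$ for all $j,j'\in[n]$, i.e.\ it is an envy-free partition for $\mu_1,\dots,\mu_n$. (As in \cref{thm:asada} one may even hold one of the measures back, although the statement does not ask for this.)

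The only genuinely delicate point --- and the reason one cannot simply cite \cref{thm:delta-spaces} --- is this replacement of ``Lebesgue measure zero on the relevant facets'' by ``$\mu_j$-measure $<\varepsilon$ on the relevant facets'': one must choose $\varepsilon$ strictly below the threshold $1/n$ so that a nearly empty cell still cannot be any measure's favorite, and one must invoke the uniform (over the finitely many measures $\mu_1,\dots,\mu_n$) decay estimate for power diagrams to produce a single simplex $S$ that works for all of them. The continuity of $x\mapsto\mu_j(C_i)$ --- which also underlies the $\Delta$-space formalism --- is routine given the absolute continuity of the measures and the continuous dependence of power-diagram cells on their weights.
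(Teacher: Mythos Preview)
Your proposal is correct and follows essentially the same route as the paper: parametrize the zero-sum weights by the affine simplex $S$, use the decay estimate with $0<\varepsilon<1/n$ so that on the facet $\{\alpha_i=M\}$ the cell $C_i$ cannot be maximal for any $\mu_j$, and then apply \cref{thm:asada} to the resulting KKM covers. You supply a couple of details the paper leaves implicit---taking a single $M$ that works uniformly for all $n$ measures, and noting that $M<0$ so that $S$ is genuinely an $(n-1)$-simplex---but the argument is the same.
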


This holds because the induced cover of $\delta^{n-1}$ will be a KKM cover, as the set $A_i$ will not intersect the facet opposite to vertex $i$.

We also prove a generalization of Levi's cone partition result.  Levi's theorem is the first high-dimensional mass partition result, predating the ham sandwich theorem \cite{Levi:1930ea}.  

\begin{theorem}[Levi 1930]
    Let $(C_1, \dots, C_{d+1})$ be a convex partition of $\rr^d$ such that each $C_i$ is a convex cone, and each cone has its apex at the origin.  Let $\mu$ be an absolutely continuous probability measure on $\rr^d$.  Then, there exists a vector $x \in \rr^d$ such that the translates $x + C_i$ all satisfy
    \[
\mu (x + C_i) = \frac{1}{n}.
    \]
\end{theorem}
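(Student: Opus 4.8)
Write $n = d+1$. The plan is to present a family of translates of $(C_1,\dots,C_n)$ as a space of partitions parametrized by $\Delta^{n-1}$ and then run a KKM argument as in the proof of \cref{thm:delta-spaces}, with all $n$ measures taken equal to $\mu$; note that an envy-free partition for the tuple $(\mu,\dots,\mu)$ is precisely an equipartition of $\mu$, since $\mu(C_{\pi(i)})\ge\mu(C_{\pi(i')})$ for all $i,i'$ forces the $n$ numbers $\mu(C_k)$ to coincide.

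First I would reduce, by a routine limiting argument (restrict and renormalize $\mu$ to a large ball, solve, and let the ball grow; the solutions stay bounded because $\bigcap_i(-C_i)=\{0\}$), to the case where $\mu$ has compact support $K$. For each $x\in\rr^d$ the sets $x+C_1,\dots,x+C_n$ again form a convex partition of $\rr^d$, so $\sum_i\mu(x+C_i)=1$, and each $x\mapsto\mu(x+C_i)$ is continuous because $\mu$ is absolutely continuous; it therefore suffices to find $x$ with $\mu(x+C_1)=\dots=\mu(x+C_n)$. The input about the behaviour at infinity is: if $w\notin C_i$ then $\mu(-Tw+C_i)=0$ for all large $T$, since $K$ is compact and $C_i$ a closed cone. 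Equivalently, the open sets $U_i:=\{x:\mu(x+C_i)=0\}=\bigl(K+(-C_i)\bigr)^{c}$ cover a neighbourhood of infinity in $\rr^d$: the set $\bigcap_i\bigl(K+(-C_i)\bigr)$ lies inside the closed convex set $\bigcap_i\bigl(\operatorname{conv}K+(-C_i)\bigr)$, whose recession cone is $\bigcap_i(-C_i)=\{0\}$, so it is bounded. This is where I use that the cones are pointed (which is what ``apex at the origin'' should be taken to mean: the fan $\{C_i\}$ has trivial lineality space); the hypothesis is genuinely needed, since a partition of $\rr^d$ into prisms over a lower-dimensional cone decomposition --- whose members all contain a common line --- admits no equipartition for a generic $\mu$.

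The technical heart is to build a continuous $\rho:\Delta^{n-1}\to\rr^d$ with $\rho(\Phi_i)\subseteq U_i$ for every facet $\Phi_i=\{x\in\Delta^{n-1}:x_i=0\}$. One constructs $\rho$ over the skeleta of $\Delta^{n-1}$: for each $k$ pick $a_k\in\operatorname{int}(C_k)$ lying in no other $C_j$ (such $a_k$ exist because $\bigcup_{j\ne k}(\operatorname{int}(C_k)\cap C_j)$ is nowhere dense in $\operatorname{int}(C_k)$), send the $k$-th vertex of $\Delta^{n-1}$ to $-Ta_k$ for a large common $T$ --- this point lies in $\bigcap_{j\ne k}U_j$ --- and then extend over each face $\sigma_I$ keeping the image in $\bigcap_{j\notin I}U_j$. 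I expect this last extension, i.e.\ verifying that the sets $\bigcap_{j\notin I}U_j$ (complements of finite unions of translated cones) carry enough connectivity to extend a map over an $|I|$-cell, to be the main obstacle; this is the analogue of the join construction $\mathcal{A}*_v\mathcal{B}$, and once $\rho$ is in hand the rest is routine.

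Granting $\rho$, set $C_i(x):=\rho(x)+C_i$ and let $A_i$ be the set of $x\in\Delta^{n-1}$ with $\mu(C_i(x))\ge\mu(C_j(x))$ for all $j$; each $A_i$ is closed, and on $\Phi_i$ we have $\mu(C_i(x))=0$ because $\rho(\Phi_i)\subseteq U_i$. If $x$ lies on a face $\sigma_I$, then $x\in\Phi_j$ for every $j\notin I$, so $\mu(C_j(x))=0$ there, and since the $n$ numbers $\mu(C_k(x))$ sum to $1$ their maximum is attained by some $i\in I$, whence $x\in A_i$. Thus $(A_1,\dots,A_n)$ is a KKM cover of $\Delta^{n-1}$, and \cref{thm:asada} applied to $n$ copies of this cover yields $x^{\ast}\in\Delta^{n-1}$ and a permutation $\pi$ with $x^{\ast}\in A_{\pi(i)}$ for all $i$, i.e.\ $x^{\ast}\in\bigcap_i A_i$. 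At $x^{\ast}$ all the $\mu(C_i(x^{\ast}))$ are equal, hence each equals $1/n$, and $\rho(x^{\ast})$ is the required vector.
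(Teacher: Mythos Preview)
Your overall strategy---parametrize translates of the cone fan by a simplex and run a KKM argument with all $n$ copies of $\mu$---is exactly the paper's. The gap is the one you flag yourself: you do not actually construct the map $\rho:\Delta^{n-1}\to\rr^d$ carrying each facet $\Phi_i$ into $U_i$, and you are right that a skeleton-by-skeleton extension would force you to verify connectivity of the sets $\bigcap_{j\notin I}U_j$, which is genuine work you have not done.

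The paper sidesteps this entirely with a short geometric observation (in its proof of \cref{thm:strong-levi}, of which Levi's theorem is the case $\mu_1=\cdots=\mu_{d+1}=\mu$, $\alpha_i=1/(d+1)$). Since each $C_i$ is a proper closed convex cone with apex at the origin, it lies in some closed half-space $H_i^-$ through the origin; because $\bigcup_i C_i=\rr^d$, also $\bigcup_i H_i^-=\rr^d$. Translate each $H_i^-$ outward to a half-space $G_i^-$ with $\mu(G_i^-)<1/n$ (so no compact-support reduction or limiting argument is needed), and set $\Delta=\bigcap_i G_i^+$. This is a geometric $d$-simplex sitting inside $\rr^d$, and the inclusion $\Delta\hookrightarrow\rr^d$ \emph{is} the map $\rho$ you were looking for: on the facet $F_i\subset\partial G_i^+$ one has $x+C_i\subset x+H_i^-=G_i^-$, hence $\mu(x+C_i)<1/n$. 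The paper then takes $A_i=\{x\in\Delta:\mu(x+C_i)\ge 1/n\}$ (equivalent to your ``max'' condition when all measures coincide) and applies \cref{thm:asada} exactly as you do. So the missing ingredient is simply that $\rho$ can be taken to be affine, and the connectivity obstacle evaporates.
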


This result has been generalized by Borsuk and by Vre\'cica and \v{Z}ivaljevi\'c \cites{borsuk1953application, Vrecica:2001dh}.  We prove an envy-free version of their generalizations.

\begin{theorem}\label{thm:strong-levi}
    Let $(C_1, \dots, C_{d+1})$ be a convex partition of $\rr^d$ such that each $C_i$ is a convex cone and all cones have their apex at the origin, and let $\mu_1, \dots, \mu_{d+1}$ be absolutely continuous measures on $\rr^d$.  Let $\alpha_1, \dots, \alpha_{d+1}$ be positive real numbers that sum to $1$.  Then, there exists a vector $x$ and a permutation $\pi:[d+1] \to [d+1]$ such that each translate $x+C_i$ satisfies
    \[
\mu_{\pi(i)}(x + C_i) \ge \alpha_i.
    \]
    Moreover, the vector $x$ can be found even if we don't have access to $\mu_{d+1}$ in advance.
\end{theorem}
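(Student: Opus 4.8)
The plan is to parametrize the translates of the cone fan $(C_1,\dots,C_{d+1})$ by the simplex $\Delta^d$ and apply \cref{thm:asada} with $n=d+1$. This is a natural fit: there are $d=n-1$ measures $\mu_1,\dots,\mu_d$ known in advance, $\rr^d$ has exactly $d$ translational degrees of freedom, and $\Delta^{n-1}=\Delta^d$, so \cref{thm:asada} should produce a translate that works for any additional measure $\mu_{d+1}$, giving the ``moreover'' clause for free.

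First I would pick $\varepsilon$ with $0<\varepsilon<\min_i\alpha_i$ and a ball $B$ centered at the origin large enough that $\mu_i(B)\ge 1-\varepsilon$ for every $i\in[d+1]$; this is possible because the $\mu_i$ are probability measures. Then, with a radius $\rho$ large relative to $B$, I want a continuous map $x\colon\Delta^d\to \bar B_\rho\subset\rr^d$ such that whenever $\beta=(\beta_1,\dots,\beta_{d+1})\in\Delta^d$ has $\beta_{i'}=0$, the translated cone $x(\beta)+C_{i'}$ is disjoint from $B$; equivalently, writing $I=\{i:\beta_i>0\}$, one has $B\subseteq\bigcup_{i\in I}(x(\beta)+C_i)$. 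Concretely, $x$ should send the barycenter of $\Delta^d$ to the origin and, near a face $\sigma_I$, push the whole fan to distance $\approx\rho$ in a direction $w$ with $-w$ in the interior of $\bigcup_{i\in I}C_i$, which drives the complementary cones $C_{i'}$, $i'\notin I$, entirely off $B$; one interpolates radially in between.

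Granting such an $x$, for each of the known measures $\mu_j$, $j\in[d]$, I would define a cover $(A^j_1,\dots,A^j_{d+1})$ of $\Delta^d$ by
\[
A^j_i=\bigl\{\beta\in\Delta^d : \mu_j\bigl(x(\beta)+C_i\bigr)\ge\alpha_i\bigr\}.
\]
Each $A^j_i$ is closed, since $\beta\mapsto\mu_j(x(\beta)+C_i)$ is continuous. If $\beta_i=0$ then $x(\beta)+C_i$ misses $B$, so $\mu_j(x(\beta)+C_i)\le\mu_j(\rr^d\setminus B)\le\varepsilon<\alpha_i$ and $\beta\notin A^j_i$; thus $A^j_i$ does not meet the facet opposite vertex $i$. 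The sets also cover $\Delta^d$: since the $x(\beta)+C_i$ partition $\rr^d$, $\sum_i\mu_j(x(\beta)+C_i)=1=\sum_i\alpha_i$, so some $i$ satisfies $\mu_j(x(\beta)+C_i)\ge\alpha_i$. Hence $(A^j_1,\dots,A^j_{d+1})$ is a KKM cover of $\Delta^{n-1}$. Applying \cref{thm:asada} to these $d=n-1$ KKM covers gives a point $\beta^*\in\Delta^d$ such that for any further KKM cover --- in particular the one built from $\mu_{d+1}$ in the same way --- there is a permutation $\sigma\colon[d+1]\to[d+1]$ with $\beta^*\in A^j_{\sigma(j)}$ for every $j$, i.e.\ $\mu_j(x(\beta^*)+C_{\sigma(j)})\ge\alpha_{\sigma(j)}$. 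Setting $x:=x(\beta^*)$ and $\pi:=\sigma^{-1}$ gives $\mu_{\pi(i)}(x+C_i)\ge\alpha_i$ for all $i$, and $\mu_{d+1}$ was never used to locate $\beta^*$.

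The hard part will be constructing the parametrization $x$: one needs to push any prescribed subfamily of the cones off the ball $B$ by a single translation, continuously and consistently over all faces of $\Delta^d$ simultaneously. This rests on the geometry of cone fans --- roughly, that the spherical cells $C_i\cap S^{d-1}$ of a convex partition of $\rr^d$ into $d+1$ cones form an arrangement with nerve $\partial\Delta^d$, so that for each proper $I\subsetneq[d+1]$ the interior of $\bigcup_{i\in I}C_i$ meets $S^{d-1}$ in a nonempty connected set and these fit together coherently as $I$ grows (plus the routine fact that translated cones vary continuously in measure). This is precisely the feature of cone fans behind Levi's theorem, so one could instead extract the parametrization from the generalized Levi theorem itself; in either case, verifying this boundary behavior of $x$ is the main obstacle, and the remainder is just a pigeonhole step and a direct appeal to \cref{thm:asada}.
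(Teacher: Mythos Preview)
Your overall framework is exactly the paper's: build KKM covers $A^j_i=\{x:\mu_j(x+C_i)\ge\alpha_i\}$, check the boundary condition, and apply \cref{thm:asada}. The difference is in how the simplex is obtained. You try to manufacture a continuous map $x:\Delta^d\to\rr^d$ sending each face $\sigma_I$ into the region where the complementary cones miss a large ball, and you correctly flag this construction as the hard part (your sketch via the nerve of the spherical cells is plausible but would need real work, and appealing to the generalized Levi theorem you are proving is circular).

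The paper bypasses this entirely by realizing the simplex \emph{inside} $\rr^d$ and using the identity map as the parametrization. For each cone $C_i$ pick a closed half-space $H^-_i$ through the origin with $C_i\subset H^-_i$; since the cones cover $\rr^d$, so do the $H^-_i$. Translate each $H^-_i$ outward to a half-space $G^-_i$ with $\mu_j(G^-_i)<\alpha_i$ for all $j$, and set $\Delta=\bigcap_i G^+_i$, a $d$-simplex whose facet $F_i$ lies on $\partial G^-_i$. Now the KKM boundary condition is immediate: if $x\in F_i$ then $x+C_i\subset x+H^-_i=G^-_i$, so $\mu_j(x+C_i)<\alpha_i$ and $x\notin A^j_i$. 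No ball, no $\varepsilon$, no interpolation over faces --- the half-space trick does all the work your map $x$ was supposed to do, in one line. What this buys is that the ``geometry of cone fans'' you were worried about reduces to the single observation that each cone sits in a half-space through its apex.
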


If $\mu_1 = \dots = \mu_{d+1}$, the result above is the generalization by Borsuk and by \v{Z}ivaljevi\'c and Vre\'cica.  %We give two proofs of \cref{thm:strong-levi}.  Both boil down to applications of the KKM theorem of Asada et al., but we consider the setups to be interesting enough to include here.

\begin{proof}[Proof of \cref{thm:strong-levi}]
    For each $C_i$, we can find a half-space $H^-_i$ such that $C_i \subset H_i$ and its boundary hyperplane contains the origin.  Since the union of all $C_i$ covers $\rr^d$, so does the union of the half-spaces $H^-_i$.  Let $G^-_i$ be a translate of $H^-_i$ so that $\mu_j (G^-_i) < \alpha_i$ for all $j$.  Let $G^+_i$ be the closed half-space opposite to $G^-_i$.  Now consider the simplex $\bigcap_{i=1}^{d+1} G^+_i$.  \cref{fig:levi} illustrates the construction in the plane.  We denote by $F_i$ the facet contained in $G^-_i$, and $v_i$ the vertex opposite to $F_i$.  Now, each measure $\mu_j$ induces a KKM cover $(A^j_1, \dots, A^j_{d+1})$ of $\Delta$ in the following way.  We have $x \in A^j_i$ if and only if $\mu_j(x + C_i) \ge \alpha_i$.  Note that for $x \in F_i$, by construction we have $x \not\in A^j_i$.  Also, since $\sum_{i=1}^{d+1} \mu_j (x + C_i) = 1 = \sum_{i=1}^{d+1} \alpha_i$, then for at least one $i$ we must have $\mu_j (x + C_i) \ge \alpha_i$, so $(A^j_1, \dots, A^j_{d+1})$ indeed covers $\Delta$.  Now we simply apply \cref{thm:asada} to the $d+1$ covers to obtain the desired result.
\end{proof}

\begin{figure}
    \centering
    \includegraphics{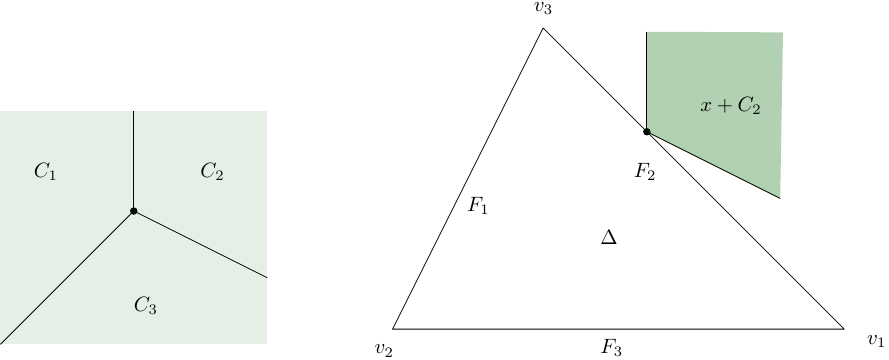}
    \caption{A construction of $\Delta$ in $\rr^2$}
    \label{fig:levi}
\end{figure}

%\begin{proof}[Second proof of \cref{thm:strong-levi}]
 %   For a set $(C_1,\dots, C_n)$
%\end{proof}

%\section{High dimensional applications of KKM}

%- Nested hyperplane partitions with fixed directions, include results from previous section if $d=2$

%- existence of envy-free weights, generalization of Levi's theorem

\section{Convex partitions in high dimensions}

We prove the existence of simultaneous proportional and envy-free partitions for several groups of measures on $\rr^d$. First, if the number of convex pieces $n$ is a prime power, we prove the existence of simultaneous envy-free partitions of $\rr^d$.  This is the stronger fairness constraint in our results.

%We adapt the proof of Gale's theorem by constructing a matrix whose columns correspond to measures and whose rows correspond to convex pieces in a partition, such that each entry is positive if and only if the corresponding measure prefers the corresponding piece in a given partition. We normalize the sums of the columns, then use the following theorem by Blagojević, Ziegler 2014 to obtain a partition that normalizes the sums of the rows, giving us a doubly stochastic matrix.

%\textcolor{red}{removed theorem}

%This gives us a much stronger result than before --- the existence of a partition that equiparts one measure $\mu$ (which we will refer to as the ``anchor" measure), and is simultaneously a proportional or envy-free partition for $d-1$ sets of $n$ measures each. 

%\begin{theorem}\label{thm:EFPgen}
%    Let $\mu$ be an absolutely continuous probability measure in $\rr^d$. If $n$ is a prime power, given $d-1$ sets of absolutely continuous probability measures $(\mu_1^1,\dots,\mu_n^1),\dots,(\mu_1^{d-1},\dots,\mu_n^{d-1})$ in $\rr^d$, there exists a partition of $\rr^d$ into $n$ convex regions $C_1,\dots,C_n$ such that
 %   $$\mu(C_1)=\dots=\mu(C_n)=1/n$$
  %  and $d-1$ permutations $\pi_1,\dots,\pi_{d-1}:[n]\rightarrow [n]$ such that
   % $$\mu_i^r(C_{\pi_r(i)})\geq\mu_i^r(C_{\pi_r(i')}) \text{\; for all\;} i'\neq i.$$ 
%\end{theorem}

\begin{proof}[Proof of \cref{thm:envy-free-convex-simple}]
Let us briefly explain the structure of the proof.  Given a partition $P=(C_1,\dots, C_n) \in \EMP(\mu, n)$, for each $n$-tuple $(\mu^r_1, \dots, \mu^r_{n})$ of measures we will construct an $n\times n$ matrix $M^r(P)$.  \cref{thm:BZ14} will guarantee that for some $P \in \EMP(\mu,n)$, each of the $d-1$ matrices we constructed will be doubly stochastic, which we will use to finish the proof.  We have to be careful that our construction of $P$ does not depend on $\mu^r_n$ for $r \in [d-1]$ to fulfill the requirements of \cref{thm:envy-free-convex-simple}.

Let $\varepsilon >0$ be a real number.    Given a partition $P=(C_1,\dots,C_n)\in \EMP(\mu, n)$, and $i \in [n], j \in [n-1]$
    \begin{equation*}
        g_{ij}^r(P)=\begin{cases}
            0 &\text{if\;} \mu_j^r(C_i)<\max\{\mu_j^r(C_{i'}) \text{\;for\;} i'\in [n]\}-\epsilon\\
            \mu_j^r(C_i)-(\max\{\mu_j^r(C_{i'}) \text{\;for\;} i'\in [n]\}-\epsilon) &\text{otherwise}
        \end{cases}
    \end{equation*}
    and for $j = n$ we define
    \[
g^r_{in}(P) = \frac{1}{n}.
    \]
    Note that $g^r_{ij}(P)$ is non-negative and varies continuously with $P$. We have that $g_{ij}^r(P)>0$ if and only if $\mu_j^r(C_i)>\mu_j^r(C_{i'})-\epsilon$ for all $i'\in [n]$.  If we fix $j$, there exists at least one value $i \in [n]$ such that $g^r_{ij}(P) \ge \varepsilon > 0$.  We can consider the values $g^r_{ij}$ as the entries of an $n \times n$ matrix and normalize the columns so that they sum to $1$.  In explicit terms, we define
    \[
    M_{ij}^r(P) = \frac{g_{ij}^r(P)}{\sum_{k=1}^n g_{kj}^r(P)}.
    \]
    The value $M_{ij}^r$ is the entry at the $i$-th row and $j$-th column in the $n \times n$ matrix $M^r(P)$.  Note that $M^r(P)$ changes continuously as $P$ changes in $\EMP(\mu)$.  This gives us $d-1$ matrices $M^1(P), \dots, M^{d-1}(P)$.
    
    Let $$f_i^r(P)=\sum_{j=1}^n M_{ij}^r(P)$$ be the sum of the $i$-th row of $M^r(P)$. By construction, the function $F^r=(f_1^r,\dots,f_n^r):EMP(\mu,n) \to \rr^n$ is a continuous $S_n$-equivariant function.% Then
    %\begin{align*}
    %    f_j^r(\pi P) &= \sum_{i=1}^n M_{ji}^r(\pi P)\\
    %    &= \sum_{i=1}^n \frac{1}{\sum_{k=1}^n g_{ki}^r(\pi P)} g_{ji}^r(\pi P)\\
    %    &= \sum_{i=1}^n \frac{1}{\sum_{k=1}^n g_{ki}^r(P)} g_{\pi(j)i}^r(P)\\
    %    &= f_{\pi(j)}^r(P)
    %\end{align*}
    %so $F^r$ is $S_n$-equivariant.

    By \cref{thm:BZ14}, there exists a partition $P\in\EMP(\mu, n)$ that simultaneously equalizes the components of each of $F^1,\dots,F^{d-1}$:
    $$f_1^r(P)=\dots=f_n^r(P) \text{\; for \:} r=1,\dots,d-1.$$

    Then for this partition, $M^1(P),\dots,M^{d-1}(P)$ are $d-1$ doubly stochastic matrices. By Birkhoff's theorem, each $M^r(P)$ is a convex combination of permutation matrices.  Recall we still have not used $\mu^r_n$.  For any $i' \in [n]$, since $M^r_{i'n} (P) = 1/n > 0$ and $M^r(P)$ is a convex combination of permutation matrices, there exists a permutation $\pi^{i'}_r:[n]\rightarrow [n]$ such that $M_{\pi^{i'}_r(j)j}^r(P)>0$ for all $j\in[n]$ and $\pi^{i'}_r(n) = i'$. This implies that for $j\in[n]$, we have $\mu_j^r(C_{\pi^{i'}_r(j)})>\mu_j^r(C_i)-\epsilon$ for all $i\in [n]$.  The space of convex equipartitions of $\mu$ into $n$ parts (not necessarily from a power diagram) is compact, and the number of possible permutations $\pi^{i'}_r$ is finite.  Therefore, if we take a sequence $\varepsilon_m$ such that $\varepsilon_m \to 0$, we can assume without loss of generality that for each $\varepsilon_m$ each of the permutations $\pi^{i'}_r$ is the same, and that as $\varepsilon_m \to 0$ the partition $P$ approaches a convex equipartition $\overline{P}=(C_1,\dots,C_n)$ of $\mu$.  This equipartition satisfies
    \[
    \mu_j^r(C_{\pi^{i'}_r(j)})\geq\mu_j^r(C_i) \text{\; for all\;} i\in[n]
    \]
    for all $r\in[d-1]$ and $i\in n$.  Finally, for each $r \in [d-1]$, we check which part $C_{i_r}$ has the largest $\mu^r_n$ value.  The permutations $\pi^{i_r}_r$ for $r \in [d-1]$ give us the desired allocation.
\end{proof}

For general $n$, we use a subdivision argument to show the existence of simultaneous proportional partitions of $\rr^d$.  Subdivision arguments are standard in mass partition results.  In this case, the subdivision argument is more nuanced than usual, as it requires a slighly stronger version of \cref{thm:envy-free-convex-simple}.  Given $ms$ participants, each with their own measure, we seek to divide $\rr^d$ into $m$ sets so that each part is preferred by exactly $s$ participants, without overlaps.  Moreover, we want to do this simultaneously for $d-1$ groups of $ms$ participants each.

%If we can factorize $n=ms$, where $m$ is a prime power, we first partition $\rr^d$ into $m$ pieces such that each set of measures can be evenly split into $m$ subsets, each subset of $s$ measures preferring a given piece. Then we recursively subdivide each piece according to the sets of $s$ measures preferring that piece, until we reach a proportional partition into $n$ pieces. 

%Lemma $\ref{lem:coarsePrP}$ gives us the existence of such a ``coarse" proportional partition, where fewer pieces (than the total number of people) are distributed to disjoint subsets of people (rather than individuals). The proof closely follows that of Theorem \ref{thm:EFPgen}, but constructs a doubly stochastic matrix that is not necessarily square. Accordingly, we will need the following generalization of Birkhoff's theorem to non-square matrices by Caron et al. 1997:

%\begin{lemma}\cite{1997chao.dyn..1003C}\label{lem:caron}
%If $M$ is a $m\times n$ doubly stochastic matrix, where $m\leq n$, then the following hold:
%\begin{enumerate}[(i)]
%    \item Each column contains at least one positive entry.
%    \item Each row contains at least $\lceil n/m \rceil$ positive entries.
%\end{enumerate}
%\end{lemma}

\begin{lemma}\label{lem:coarsePrP}
    Let $m$ be a prime power, $n$ be a multiple of $m$ and $\mu$ be an absolutely continuous probability measure on $\rr^d$. For any $d-1$ $n$-tuples of absolutely continuous probability measures $(\mu_1^1,\dots,\mu_n^1),\dots,(\mu_1^{d-1},\dots,\mu_n^{d-1})$ on $\rr^d$, there is a partition of $\rr^d$ into $m$ convex regions $C_1,\dots,C_m$ and $d-1$ functions $\pi^r:[n] \to [m]$ for $r \in [d-1]$ such that
    \begin{align*}
    \mu(C_1)=\dots=\mu(C_m)=\frac{1}{m}&  \\
    \mu^r_j (C_{\pi^r(j)}) \ge \frac{1}{m}&  \qquad  \mbox{for all } i \in [m]
    \end{align*}
    and such that for each $i \in [m], r \in [d-1]$, the preimage $(\pi^r)^{-1}(i)$ has exactly $n/m$ elements.
\end{lemma}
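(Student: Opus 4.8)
The plan is to follow the architecture of the proof of \cref{thm:envy-free-convex-simple}, but to partition $\rr^d$ into $m$ pieces instead of $n$. Since $m$ is a prime power, \cref{thm:BZ14} applies to the space $\EMP(\mu,m)$ of equal-$\mu$-measure power diagrams of $\rr^d$ into $m$ parts. The new feature is that, for each $r\in[d-1]$, the preferences of the $n$ participants among the $m$ pieces will be recorded in an $m\times n$ matrix $M^r(P)$ lying in the transportation polytope with column margins $1$ and row margins $n/m$, rather than in the Birkhoff polytope; the row sums of $M^r(P)$ will be the components of the $S_m$-equivariant function we feed to \cref{thm:BZ14}.

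Concretely, fix $\varepsilon>0$ and, for $P=(C_1,\dots,C_m)\in\EMP(\mu,m)$, $i\in[m]$ and $j\in[n]$, put
\[
g^r_{ij}(P)=\max\bigl\{0,\ \mu^r_j(C_i)-\tfrac1m+\varepsilon\bigr\}.
\]
Each $g^r_{ij}$ is nonnegative and continuous in $P$, and since $\sum_{i}\mu^r_j(C_i)=1$ forces $\max_i\mu^r_j(C_i)\ge 1/m$, we have $\sum_{k}g^r_{kj}(P)\ge\varepsilon>0$ for every $j$. Hence $M^r_{ij}(P)=g^r_{ij}(P)/\sum_{k}g^r_{kj}(P)$ is well defined, continuous in $P$, and has all column sums equal to $1$; moreover $M^r_{ij}(P)>0$ if and only if $\mu^r_j(C_i)>1/m-\varepsilon$. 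Setting $f^r_i(P)=\sum_{j=1}^{n}M^r_{ij}(P)$, a permutation of the pieces of $P$ permutes the rows of $M^r(P)$, so $F^r=(f^r_1,\dots,f^r_m)\colon\EMP(\mu,m)\to\rr^m$ is a continuous $S_m$-equivariant map.

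Applying \cref{thm:BZ14} to $F^1,\dots,F^{d-1}$, we obtain $P\in\EMP(\mu,m)$ with $f^r_1(P)=\dots=f^r_m(P)$ for every $r$; summing over the $m$ rows of $M^r(P)$ gives $\sum_{i,j}M^r_{ij}(P)=n$, so each row sums to $s:=n/m$. Thus $M^r(P)$ is a nonnegative $m\times n$ matrix with column sums $1$ and row sums $s$. Expand it into the $n\times n$ matrix $\widetilde M^r$ with rows indexed by $[m]\times[s]$, columns by $[n]$, and $\widetilde M^r_{(i,a)\,j}=M^r_{ij}(P)/s$; then $\widetilde M^r$ is doubly stochastic, so Birkhoff's theorem writes it as a convex combination of permutation matrices, and we may pick a bijection $\sigma^r\colon[n]\to[m]\times[s]$ in the support, so that $\widetilde M^r_{\sigma^r(j)\,j}>0$ for all $j$. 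Writing $\sigma^r(j)=(\pi^r(j),a^r(j))$ defines $\pi^r\colon[n]\to[m]$; since $\sigma^r$ is a bijection, $(\pi^r)^{-1}(i)$ has exactly $s=n/m$ elements for every $i\in[m]$, and $\widetilde M^r_{\sigma^r(j)\,j}>0$ gives $M^r_{\pi^r(j)\,j}(P)>0$, i.e.\ $\mu^r_j(C_{\pi^r(j)})>1/m-\varepsilon$ for every $j\in[n]$.

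It remains to remove the $\varepsilon$, by the compactness argument used at the end of the proof of \cref{thm:envy-free-convex-simple}. The space $X(\mu,m)$ of convex $\mu$-equipartitions of $\rr^d$ into $m$ parts is compact and contains $\EMP(\mu,m)$, and there are only finitely many tuples $(\pi^1,\dots,\pi^{d-1})$ of functions $[n]\to[m]$. Taking $\varepsilon_\ell\to0$ and passing to a subsequence, we may assume the tuple $(\pi^1,\dots,\pi^{d-1})$ is the same for all $\ell$ and the corresponding partitions converge in $X(\mu,m)$ to a convex equipartition $\overline P=(\overline C_1,\dots,\overline C_m)$ of $\mu$; since $\mu$ and all the $\mu^r_j$ are absolutely continuous, this yields $\mu(\overline C_1)=\dots=\mu(\overline C_m)=1/m$ and $\mu^r_j(\overline C_{\pi^r(j)})\ge 1/m$ for all $j\in[n]$ and $r\in[d-1]$, while the fiber-size condition $|(\pi^r)^{-1}(i)|=n/m$ is preserved. \textbf{The step to watch} is the translation: the functions $g^r_{ij}$ must be chosen so that the single equalization supplied by \cref{thm:BZ14} forces $M^r(P)$ into the transportation polytope with the correct margins, and so that the ensuing rounding (Birkhoff after duplicating rows) yields maps $\pi^r$ with uniform fibers; compared with \cref{thm:envy-free-convex-simple} this is the only genuinely new ingredient, the remaining verifications being routine.
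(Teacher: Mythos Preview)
Your proof is correct and follows essentially the same approach as the paper's own argument: define a threshold function $g^r_{ij}$ around $1/m-\varepsilon$, normalize to get a column-stochastic $m\times n$ matrix, use \cref{thm:BZ14} to equalize the row sums, blow up to an $n\times n$ doubly stochastic matrix, apply Birkhoff, and finish with compactness. The only cosmetic difference is that the paper normalizes columns to $m/n$ (so rows equalize to $1$) and then stacks $n/m$ copies of $M^r(P)$, whereas you normalize columns to $1$ (so rows equalize to $n/m$) and then split each row into $s$ copies scaled by $1/s$; these yield the same doubly stochastic matrix up to a global scalar and reindexing.
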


The proof below can be modified to have the stronger conclusion $\mu^r_j (C_{\pi^r(j)}) \ge \mu^r_j(C_i)$ for all $i \in [n]$, but it does not give us any advantage for the subdivision argument.  This gives a slightly stronger version of \cref{thm:envy-free-convex-simple} if we have access to all measures in advance.  With the condition $\mu^r_j (C_{\pi^r(j)}) \ge 1/m$, we have a slightly simpler matrix construction.

\begin{proof}
    Let $\varepsilon>0$ be a real number.  Given a partition $P=(C_1,\dots, C_m)\in \EMP(\mu, m)$, define 
    \begin{equation*}
        g_{ij}^r(P)=\begin{cases}
            0 &\text{if\;} \mu_j^r(C_i)<\frac{1}{m}-\epsilon\\
            \mu_j^r(C_i)-(\frac{1}{m}-\epsilon) &\text{otherwise}.
        \end{cases}
    \end{equation*}
    Each of these values is non-negative, and changes continuously as $P$ changes.  We have that $g_{ij}^r(P)>0$ if and only if $\mu_i^r(C_j)>\frac{1}{m}-\epsilon$.

    Let 
    \[
    M_{ij}^r(P) = \frac{m \cdot g_{ij}^r(P)}{n \cdot \sum_{k=1}^m g_{kj}^r(P)}
    \]
    which is well-defined since $g_{kj}^r(P)>0$ if and only if $\mu_j^r(C_k)> 1/m-\epsilon$, which holds for at least one $k\in[m]$. These values are continuous functions of $P$ and $M_{ij}^r(P)>0$ if and only if $\mu_j^r(C_i)>\frac{1}{m}-\epsilon$.
    
    Construct a $m\times n$ matrix $M^r(P)$ such that the entry in the $i$-th row and and $j$-th column is $M_{ij}^r(P)$. The entries of $M^r(P)$ are non-negative and each of its columns sum to $m/n$. 
    
    Let $$f_i^r(P)=\sum_{j=1}^n M_{ij}^r(P)$$ be the sum of the $i$-th row of $M^r(P)$. Note that $F^r=(f_1^r,\dots,f_m^r): \EMP(\mu,m) \to \rr^m$ is an $S_m$-equivariant continuous function.% For any $\pi\in S_m$,
    %\begin{align*}
    %    f_j^r(\pi P) &= \sum_{i=1}^n M_{ji}^r(\pi P)\\
    %    &= \sum_{i=1}^n \frac{1}{\sum_{k=1}^m g_{ki}^r(\pi P)} g_{ji}^r(\pi P)\\
    %    &= \sum_{i=1}^n \frac{1}{\sum_{k=1}^m g_{ki}^r(P)} g_{\pi(j)i}^r(P)\\
    %    &= f_{\pi(j)}^r(P)
    %\end{align*}
    %so $F^r$ is $S_m$-equivariant.

    By \cref{thm:BZ14}, there exists a partition $P\in\EMP(\mu, m)$ that simultaneously equalizes the components of each of $F^1,\dots,F^{d-1}$:
    %$$f_1^r(P)=\dots=f_m^r(P) \text{\; for \:} r=1,\dots,d-1.$$

    Now consider the matrix $M^r(P)$ for any $r \in [d-1]$.  Its column sum is constant and equal to $m/n$, and its row sum is constant.  Therefore, the sum of each row is $1$.   We can construct an $n \times n$ matrix $N^r(P)$ by placing $n/m$ copies of $M^r(P)$ on top of each other.  This gives us an $n \times n$ matrix with non-negative entries whose column sum and row sum are all $1$.  Applying Birkhoff's theorem again, there is a permutation $\sigma:[n]\to [n]$.  Such that $N^r_{\sigma(j)j} > 0$.  We now define $\pi^r(j)$ the unique integer in $[m]$ such that $\pi^r(j)$ is congruent to $\sigma^r(j)$ modulo $m$.  We conclude by a compactness argument taking $\varepsilon \to 0$ as in the proof of \cref{thm:envy-free-convex-simple}.
\end{proof}

With \cref{lem:coarsePrP}, we now prove \cref{thm:proportional-convex-simple}.

%\begin{theorem}\label{thm:PrPgen}
%    Let $\mu$ be an absolutely continuous probability measure on $\rr^d$. For any $n$, given $d-1$ sets of probability measures $(\mu_1^1,\dots,\mu_n^1),\dots,(\mu_1^{d-1},\dots,\mu_n^{d-1})$ on $\rr^d$, there exists a partition of $\rr^d$ into $n$ convex regions $C_1,\dots,C_n$ such that
%    $$\mu(C_1)=\dots=\mu(C_n)=1/n$$
%    and $d-1$ permutations $\pi_1,\dots,\pi_{d-1}:[n]\rightarrow [n]$ such that
%    $$\mu_i^r(C_{\pi_r(i)})\geq\frac{1}{n}$$
%    for $r=1,\dots,d-1$ and $i=1,\dots,n$.
%\end{theorem}

\begin{proof}[Proof of \cref{thm:proportional-convex-simple}]
    We proceed via strong induction on $n$. The base case $n=1$ holds by taking $C_1=\rr^d$. Suppose $n\geq 2$ and the theorem holds for all $1\leq n'< n$.
    
    First, if $n$ is a prime power, the theorem holds by Theorem \ref{thm:envy-free-convex-simple}.
    
    Otherwise if $n$ is not a prime power, it can be factored as $n=ms$ with $m,s<n$ and $m$ a prime power. By Lemma \ref{lem:coarsePrP}, there is a partition of $\rr^d$ into $m$ convex regions $C_1,\dots,C_m$ and $d-1$ partitions of $[n]$ into $m$ sets $S_1^r,\dots,S_m^r$ of $s$ indices each such that for $r=1,\dots, d-1$ and $j=1,\dots,m$, we have $\mu_i^r(C_j)\geq \frac{1}{m}=\frac{s}{n}$ for $i\in S_j^r$. Then for each $C_j$, we apply the theorem to the $s$ measures $\mu_i|_{C_j}$ for $i\in S_j^r$, and obtain a partition of $\rr^d$ into $s$ regions $C_{j,1},\dots,C_{j,s}$ and $d-1$ permutations $\pi_{j,1},\dots,\pi_{j,d-1}:[s]\rightarrow[s]$ such that
        $\mu_i^r(C_{j,\pi_r(i)})\geq\frac{1}{n}$. Combining these completes the proof. 
        
        Concretely, let $D_{s(j-1)+h}=C_j\cap C_{j,h}$, which is convex. Define 
        \begin{align*}
            \tilde{\pi}_r:[n]&\rightarrow[n]\\
            i &\mapsto \pi_{j,r}(i-s(j-1)) \text{\; if \;} s(j-1)<i\leq sj \text{\; for \;} j=1,\dots,m.
        \end{align*}
        Then $\mu_i^r(D_{\tilde{\pi}_r(i)})\geq\frac{1}{n}$, so $D_1,\dots,D_n$ and $\tilde{\pi}_1,\dots,\tilde{\pi}_{d-1}$ are the convex partition and permutations, respectively, that we need.
    
\end{proof}

Note that using a subdivision argument weakens our result to be proportional rather than envy-free. An envy-free result would require coordination across all subdivisions at once, to ensure that the piece that one person receives from a given subdivision contains more measure than any other piece produced from the other subdivisions.

\section{Remarks and open problems}\label{sec:remarks}

We conjecture that \cref{thm:envy-free-convex-simple} should hold for general $n$.  Moreover, we also believe that the measure $\mu$ could be replaced by another set of $n$ measures.  Concretely, we have the following conjecture:

\begin{conjecture}\label{conj:envy-free-strong}
    Let $n,d$ be positive integers and let $(\mu^1_1,\dots, \mu^1_n), \dots, (\mu^d_1, \dots, \mu^d_n)$ be $d$ tuples of $n$ absolutely continuous probability measures each.  Then, there exists a convex partition $(C_1, \dots, C_n)$ of $\rr^d$ that is an envy-free partition for $(\mu^r_1, \dots, \mu^r_n)$ for each $j=1,\dots, d$.
\end{conjecture}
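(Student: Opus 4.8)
The plan is to follow the architecture of the proof of \cref{thm:envy-free-convex-simple} as closely as possible, dropping its two restrictions (that $n$ be a prime power, and that one of the tuples be a single repeated measure) one at a time. Concretely, I would attach to \emph{every} tuple $r\in[d]$ — not only to $r\le d-1$ — the $\varepsilon$-relaxed ``favourite matrix'' $M^r(P)=\big(M^r_{ij}(P)\big)$ built exactly as in the proof of \cref{thm:envy-free-convex-simple}, so that $M^r_{ij}(P)>0$ precisely when $C_i$ is within $\varepsilon$ of being $\mu^r_j$'s favourite part, and column-normalised so that every column of $M^r(P)$ sums to $1$. If one can exhibit a single convex partition $P=(C_1,\dots,C_n)$ at which all $d$ matrices $M^1(P),\dots,M^d(P)$ are doubly stochastic, then Birkhoff's theorem yields for each $r$ a permutation $\pi^r$ with $M^r_{\pi^r(j)\,j}(P)>0$ for every $j$, and the usual compactness argument as $\varepsilon\to 0$ delivers an honest envy-free partition for all $d$ tuples at once.

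Everything then reduces to a purely topological statement: one wants a space $\mathcal X$ of convex partitions of $\rr^d$ into $n$ parts, carrying an $S_n$-action, such that any $d$ continuous $S_n$-equivariant maps $\mathcal X\to\rr^n$ have a common point at which all $n$ coordinates agree (applied to the row-sum maps $F^r=(f^r_1,\dots,f^r_n)$ this forces each $M^r(P)$ to be doubly stochastic). For $\mathcal X=\EMP(\mu,n)$ with $n$ a prime power, \cref{thm:BZ14} gives exactly this for $d-1$ maps, and the distinguished equipartitioned measure in \cref{thm:envy-free-convex-simple} is precisely the price of being one map short. So two things are needed simultaneously: a space richer than $\EMP(\mu,n)$ by ``one degree of freedom'', and a balancing theorem on it that survives the passage to non-prime-power $n$. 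The natural candidates for $\mathcal X$ are the space of \emph{all} convex equipartitions of a fixed auxiliary measure into $n$ parts, or a suitably compactified space of power diagrams with free sites and weights; both are homotopically close to configuration spaces $F(\rr^d,n)$, and one would attack the balancing statement by computing the relevant $S_n$-equivariant obstruction (for instance via the Fadell–Husseini ideal-valued index) and showing it does not vanish.

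The conjecture is at least consistent with two extreme regimes. For $n=2$ a convex partition into two parts is a pair of opposite closed halfspaces; by \cref{thm:ham-sandwich} one may choose a hyperplane equipartitioning $\mu^1_1,\dots,\mu^d_1$, after which each tuple is trivially envy-free since the remaining participant can be assigned whichever side they weakly prefer (the allocating permutation may differ from tuple to tuple). When $n$ is a prime power the conjecture is ``off by one tuple'' from \cref{thm:envy-free-convex-simple}, so even settling the prime-power case would already be genuine progress. One could alternatively try the route of Section~3: a $\Delta$-space together with \cref{thm:asada} already produces, by the argument of \cref{thm:delta-spaces}, an envy-free partition for \emph{one} $n$-tuple, even with the last measure secret. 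Handling $d$ tuples at once this way would require a ``$d$-fold colourful KKM'' statement on $\Delta^{n-1}$ — roughly, $d$ groups of $n$ KKM covers each, with a single point that is rainbow-covered by every group — and no such theorem seems presently available.

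The hard part will be the equivariant topology for non-prime-power $n$: this is exactly where \cref{thm:envy-free-convex-simple} breaks through its use of \cref{thm:BZ14}, and the usual workaround — the subdivision of \cref{lem:coarsePrP} — provably degrades envy-freeness to proportionality because it cannot coordinate the allocations across the sub-partitions. One would therefore have to find extra slack somewhere: either exploit that envy-freeness is strictly weaker than the equipartition-plus-equalization that \cref{thm:BZ14} delivers (the Birkhoff/favourite-matrix trick already uses some of this slack, but evidently not a full map's worth), or design a hybrid argument in which the first subdivision is performed by a $\Delta$-space or power diagram so that \cref{thm:asada} controls one tuple without loss, and the recursion is arranged so that the secret-measure feature of \cref{thm:asada} absorbs precisely the coordination that the naive induction throws away. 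I expect this coordination problem, rather than any single clean lemma, to be the crux.
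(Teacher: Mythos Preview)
The statement you are addressing is a \emph{conjecture} in the paper (\cref{conj:envy-free-strong}), listed in \cref{sec:remarks} among the open problems; the paper contains no proof of it, so there is nothing to compare your proposal against. What you have written is not a proof either, and you appear to know this: it is a careful and accurate outline of where the argument of \cref{thm:envy-free-convex-simple} breaks, together with candidate strategies and an honest assessment of their difficulties.

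Your diagnosis matches the paper's. The single genuine gap is exactly the one you name: you need a configuration-type space of convex $n$-partitions with a free $S_n$-action on which \emph{$d$} continuous $S_n$-equivariant maps to $\rr^n$ can be simultaneously equalized, for \emph{all} $n$. \cref{thm:BZ14} supplies only $d-1$ such maps on $\EMP(\mu,n)$, and only when $n$ is a prime power; gaining one extra map and dropping the prime-power hypothesis are both open. Your observation that the subdivision route of \cref{lem:coarsePrP} inevitably degrades envy-freeness to proportionality is precisely the obstacle the paper flags after the proof of \cref{thm:proportional-convex-simple}. The $n=2$ verification via \cref{thm:ham-sandwich} is correct and is a useful sanity check, but it does not extend: for $n\ge 3$ the analogue would require equalizing $d$ equivariant maps, which is the missing lemma again. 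In short, your proposal is a sound research plan, not a proof, and the paper agrees that the crux is the equivariant obstruction computation you describe.
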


In our proof, we used very little information about the measures.  We can prove a result for preferences that do not come from measures in the following way.  Given an absolutely continuous probability measure $\mu$ on $\rr^d$, let $X(\mu, n)$ be the space of convex equipartitions of $\mu$ into $n$ pieces.  A KKM-like cover $\mathcal{U}$ of $X(\mu,n)$ will be an $n$-tuple $(A_1, \dots, A_n)$ of sets of $X(\mu,n)$ such that
\begin{itemize}
    \item the set $A_i \subset X(\mu,n)$ is open for each $i \in [n]$,
    \item the sets $A_1,\dots, A_n$ cover $X(\mu,n)$, and
    \item if $(C_1,\dots, C_n) \in A_i$ and $\pi:[n] \to [n]$ is a permutation, then ${(C_{\pi(1)}, \dots, C_{\pi(n)}) \in A_{\pi^{-1}(i)}}$.  In other words, shuffling the partition does not change the preferences.
\end{itemize}

Given a set $(A^1_1,\dots, A^1_n), \dots, (A^n_1, \dots, A^n_n)$ of $n$ KKM-like covers of $X(\mu,d)$, we say that a partition $(C_1,\dots,C_n)$ is an envy-free partition for the covers if there exists a permutation $\pi:[n] \to [n]$ such that $(C_1,\dots, C_n) \in A^i_{\pi(i)}$ for each $i \in [n]$.  Then, we have the following theorem:

\begin{theorem}\label{thm:envy-free-strong}
    Let $n,d$ be positive integers, where $n$ is a prime power.  Let $\mu$ be an absolutely continuous probability measure on $\rr^d$, and let $(\mathcal{U}^1_1, \dots, \mathcal{U}^1_n), \dots, (\mathcal{U}^{d-1}_1, \dots, \mathcal{U}^{d-1}_n)$ be $d-1$ KKM-like covers of $X(\mu,n)$.

    Then, there exists a convex partition $C_1, \dots, C_n$ of $\rr^d$ such that
    \[
    \mu(C_1) = \dots = \mu(C_n)
    \]
    and $(C_1, \dots, C_n)$ is an envy-free partition for $(\mathcal{U}^r_1, \dots, \mathcal{U}^r_n)$ for each $r=1,\dots, d-1$.  Moreover, the partition can be found even if we do not have access to $\mathcal{U}^r_n$ for each $r\in[d-1]$.
\end{theorem}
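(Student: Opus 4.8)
The plan is to run the proof of \cref{thm:envy-free-convex-simple} essentially verbatim, changing only the way the entries of the matrices $M^r(P)$ are produced: instead of reading off values of measures, we read off an $S_n$-equivariant partition of unity subordinate to each KKM-like cover. Recall from \cref{sec:remarks} that $X(\mu,n)$ is a compact metric space carrying an $S_n$-action, and that $\EMP(\mu,n)$ sits inside $X(\mu,n)$ as an $S_n$-invariant subspace (a power-diagram equipartition of $\mu$ is in particular a convex equipartition), with continuous inclusion; hence a continuous (resp.\ $S_n$-equivariant) function on $X(\mu,n)$ restricts to one on $\EMP(\mu,n)$. Here $\mathcal U^r_j$ denotes the KKM-like cover $(A^r_{j,1},\dots,A^r_{j,n})$ carrying the preferences of observer $j$ in group $r$, paralleling the measures $\mu^r_j$ of \cref{thm:envy-free-convex-simple}.

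\emph{Step 1 (equivariant bump functions).} For each $r\in[d-1]$ and each $j\in[n-1]$, the KKM-like cover $\mathcal U^r_j$ is an open cover of the compact metric space $X(\mu,n)$, so it admits a partition of unity $(\psi^r_{j,1},\dots,\psi^r_{j,n})$ with $\psi^r_{j,i}$ continuous, non-negative, $\sum_i\psi^r_{j,i}\equiv 1$, and $\{\psi^r_{j,i}\neq 0\}\subseteq A^r_{j,i}$. Symmetrizing, e.g.\ $\phi^r_{j,i}(P)=\tfrac{1}{n!}\sum_{\tau\in S_n}\psi^r_{j,\tau^{-1}(i)}(\tau P)$, yields continuous non-negative functions with $\sum_i\phi^r_{j,i}\equiv 1$ and $\phi^r_{j,i}(\tau P)=\phi^r_{j,\tau(i)}(P)$. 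The third bullet in the definition of a KKM-like cover says exactly that $\tau P\in A^r_{j,\tau(i)}$ iff $P\in A^r_{j,i}$, which is what makes the symmetrized functions still subordinate, i.e.\ $\{\phi^r_{j,i}\neq 0\}\subseteq A^r_{j,i}$. (Alternatively, take $\phi^r_{j,i}$ proportional in $i$ to $\min\{1,\operatorname{dist}(P,X(\mu,n)\setminus A^r_{j,i})\}$ for an $S_n$-invariant metric on $X(\mu,n)$, which is automatically equivariant.)

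\emph{Step 2 (matrices, \cref{thm:BZ14}, Birkhoff).} For $P\in\EMP(\mu,n)$ set $M^r_{ij}(P)=\phi^r_{j,i}(P)$ for $j\in[n-1]$ and $M^r_{in}(P)=1/n$; note the construction never uses $\mathcal U^r_n$. Then $M^r(P)$ has non-negative entries depending continuously on $P$, each column sums to $1$, and $f^r_i(P)=\sum_{j=1}^n M^r_{ij}(P)$ defines a continuous $S_n$-equivariant map $F^r=(f^r_1,\dots,f^r_n)$ on $\EMP(\mu,n)$. Applying \cref{thm:BZ14} to $F^1,\dots,F^{d-1}$ (this is where $n$ being a prime power enters) produces $P\in\EMP(\mu,n)$ all of whose row sums, for each $r$, are equal; since the column sums are all $1$, the common row sum is $1$, so every $M^r(P)$ is doubly stochastic. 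By Birkhoff's theorem $M^r(P)=\sum_\ell c_\ell\Pi_\ell$ with $c_\ell>0$; since $M^r_{i'n}(P)=1/n>0$ for every $i'$, for each target $i'$ there is a permutation $\rho$ in this decomposition with $\rho(n)=i'$, and then $M^r_{\rho(j),j}(P)\ge c_\rho>0$, hence $\phi^r_{j,\rho(j)}(P)>0$ and so $P\in A^r_{j,\rho(j)}$ for all $j\in[n-1]$. Finally we reveal $\mathcal U^r_n$: it covers $X(\mu,n)$, so $P\in A^r_{n,i_r}$ for some $i_r$, and taking $i'=i_r$ gives a permutation $\rho^r$ with $P\in A^r_{j,\rho^r(j)}$ for all $j\in[n]$. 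Thus $(C_1,\dots,C_n)=P$ is a convex equipartition of $\mu$ that is envy-free for $(\mathcal U^r_1,\dots,\mathcal U^r_n)$ for every $r$, as desired.

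The main obstacle is Step 1: constructing the $S_n$-equivariant partition of unity (or equivariant bump functions) subordinate to each KKM-like cover, and checking that the equivariance axiom in the definition of a KKM-like cover is precisely what is needed both for the symmetrization to stay subordinate and for each $F^r$ to be $S_n$-equivariant. A secondary point to verify carefully is that restriction from $X(\mu,n)$ to $\EMP(\mu,n)$ preserves continuity and equivariance so that \cref{thm:BZ14} applies. Unlike the proof of \cref{thm:envy-free-convex-simple}, no $\varepsilon$-relaxation and no concluding compactness limit are needed here, since the partition of unity already interpolates between the boundary of $A^r_{j,i}$ and its interior.
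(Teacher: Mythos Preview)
Your proposal is correct and matches the paper's own argument almost exactly. The paper's proof simply replaces $g^r_{ij}(P)$ from the proof of \cref{thm:envy-free-convex-simple} by $g^r_{ij}(P)=\operatorname{dist}\bigl((A^r_{j,i})^c,P\bigr)$ (the distance to the complement in the metric on $X(\mu,n)$), notes that no $\varepsilon$--compactness limit is then needed, and says ``the rest of the proof follows''; this is precisely your parenthetical alternative in Step~1, and your primary route via a symmetrized partition of unity is an equally valid variant of the same idea.
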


\begin{proof}
    The proof is identical to the proof of \cref{thm:envy-free-convex-simple} except for the definition of $g^r_{ij}(P)$ and without the need for the compactness argument at the end (so $\varepsilon$ is not used).  For a fixed $r,j$, let  Let $\mathcal{U}^r_j = (A_1,\dots, A_n)$.  Recall that $A_i \subset X(\mu,d)$ is an open set, and that $X(\mu,d)$ is a metric space.  Then, for $j \in [n-1]$
    \[
    g^r_{ij}(P) = \operatorname{dist}(A_i^c, P), 
    \]
    where $A^c_i$ denotes the complement of $A_i$.  Once we have constructed $g^r_{ij}$, the rest of the proof follows.
\end{proof}

% \bib, bibdiv, biblist are defined by the amsrefs package.
\begin{bibdiv}
\begin{biblist}

\bib{Asada:2018ix}{article}{
      author={Asada, Megumi},
      author={Frick, Florian},
      author={Pisharody, Vivek},
      author={Polevy, Maxwell},
      author={Stoner, David},
      author={Tsang, Ling~Hei},
      author={Wellner, Zoe},
       title={{Fair Division and Generalizations of Sperner- and KKM-type
  Results}},
        date={2018},
     journal={SIAM J. Discrete Math.},
      volume={32},
      number={1},
       pages={591\ndash 610},
}

\bib{Aurenhammer:1998tj}{article}{
      author={Aurenhammer, Franz},
      author={Hoffmann, F.},
      author={Aronov, Boris},
       title={{Minkowski-type theorems and least-squares clustering}},
        date={1998},
     journal={Algorithmica},
      volume={20},
      number={1},
       pages={61\ndash 76},
}

\bib{Avvakumov2023}{article}{
      author={Avvakumov, Sergey},
      author={Karasev, Roman},
       title={Equipartition of a segment},
        date={2023},
        ISSN={0364-765X,1526-5471},
     journal={Math. Oper. Res.},
      volume={48},
      number={1},
       pages={194\ndash 202},
      review={\MR{4567283}},
}

\bib{Alon:1987ur}{article}{
      author={Alon, Noga},
       title={{Splitting necklaces}},
        date={1987},
        ISSN={0001-8708},
     journal={Advances in Mathematics},
      volume={63},
      number={3},
       pages={247\ndash 253},
}

\bib{borsuk1953application}{article}{
      author={Borsuk, Karol},
       title={{An application of the theorem on antipodes to the measure
  theory}},
        date={1953},
     journal={Bull. Acad. Polon. Sci},
      volume={1},
       pages={87\ndash 90},
}

\bib{Blagojevic:2018gt}{article}{
      author={Blagojević, Pavle V.~M.},
      author={Soberón, Pablo},
       title={{Thieves can make sandwiches}},
        date={2018},
     journal={Bulletin of the London Mathematical Society},
      volume={50},
      number={1},
       pages={108\ndash 123},
}

\bib{Brams:1996wt}{book}{
      author={Brams, Steven~J.},
      author={Taylor, Alan~D.},
       title={{Fair Division: From cake-cutting to dispute resolution}},
      series={Cambridge University Press},
   publisher={Cambridge University Press},
        date={1996},
}

\bib{Blagojevic:2014ey}{article}{
      author={Blagojević, Pavle V.~M.},
      author={Ziegler, G\"unter~M.},
       title={{Convex equipartitions via Equivariant Obstruction Theory}},
        date={2014-06},
     journal={Israel journal of mathematics},
      volume={200},
      number={1},
       pages={49\ndash 77},
}

\bib{Courant1941}{book}{
      author={Courant, Richard},
      author={Robbins, Herbert},
       title={What is mathematics?: an elementary approach to ideas and
  methods},
   publisher={Oxford University Press, USA},
        date={1941},
}

\bib{Gale1984}{article}{
      author={Gale, David},
       title={Equilibrium in a discrete exchange economy with money},
        date={1984},
        ISSN={0020-7276},
     journal={Internat. J. Game Theory},
      volume={13},
      number={1},
       pages={61\ndash 64},
         url={https://doi.org/10.1007/BF01769865},
}

\bib{GomezNavarro2023}{article}{
      author={Gomez-Navarro, Cuauhtemoc},
      author={Rold{\'a}n-Pensado, Edgardo},
       title={Transversals to colorful intersecting convex sets},
        date={2023},
     journal={arXiv preprint arXiv:2305.16760},
}

\bib{Goldberg:1985jr}{article}{
      author={Goldberg, Charles~H.},
      author={West, D.~B.},
       title={{Bisection of Circle Colorings}},
        date={1985},
     journal={SIAM Journal on Algebraic Discrete Methods},
      volume={6},
      number={1},
       pages={93\ndash 106},
}

\bib{Karasev:2014gi}{article}{
      author={Karasev, Roman~N.},
      author={Hubard, Alfredo},
      author={Aronov, Boris},
       title={{Convex equipartitions: the spicy chicken theorem}},
        date={2014},
     journal={Geometriae Dedicata},
      volume={170},
      number={1},
       pages={263\ndash 279},
}

\bib{Knaster:1929vi}{article}{
      author={Knaster, Bronisław},
      author={Kuratowski, Kazimierz},
      author={Mazurkiewicz, Stefan},
       title={{Ein Beweis des Fixpunktsatzes f{\"u}r n-dimensionale Simplexe}},
        date={1929},
     journal={Fundamenta Mathematicae},
      volume={14},
      number={1},
       pages={132\ndash 137},
}

\bib{Karasev:2016cn}{article}{
      author={Karasev, Roman~N.},
      author={Roldán-Pensado, Edgardo},
      author={Soberón, Pablo},
       title={{Measure partitions using hyperplanes with fixed directions}},
        date={2016},
     journal={Israel journal of mathematics},
      volume={212},
      number={2},
       pages={705\ndash 728},
}

\bib{Levi:1930ea}{article}{
      author={Levi, Friedrich},
       title={{Die Drittelungskurve}},
    language={German},
        date={1930},
     journal={Mathematische Zeitschrift},
      volume={31},
      number={1},
       pages={339\ndash 345},
}

\bib{Longueville2008}{article}{
      author={Longueville, Mark~de},
      author={\v{Z}ivaljevi\'{c}, Rade~T.},
       title={{Splitting multidimensional necklaces}},
        date={2008},
        ISSN={0001-8708},
     journal={Advances in Mathematics},
      volume={218},
      number={3},
       pages={926\ndash 939},
}

\bib{Meunier2019}{article}{
      author={Meunier, Fr\'{e}d\'{e}ric},
      author={Zerbib, Shira},
       title={Envy-free cake division without assuming the players prefer
  nonempty pieces},
        date={2019},
        ISSN={0021-2172},
     journal={Israel J. Math.},
      volume={234},
      number={2},
       pages={907\ndash 925},
         url={https://doi.org/10.1007/s11856-019-1939-6},
}

\bib{McGinnis2022}{article}{
      author={McGinnis, Daniel},
      author={Zerbib, Shira},
       title={Line transversals in families of connected sets in the plane},
        date={2022},
        ISSN={0895-4801,1095-7146},
     journal={SIAM J. Discrete Math.},
      volume={36},
      number={4},
       pages={2916\ndash 2919},
         url={https://doi.org/10.1137/21M1408920},
}

\bib{Nandakumar2012}{article}{
      author={Nandakumar, R.},
      author={Ramana~Rao, N.},
       title={Fair partitions of polygons: an elementary introduction},
        date={2012},
        ISSN={0253-4142,0973-7685},
     journal={Proc. Indian Acad. Sci. Math. Sci.},
      volume={122},
      number={3},
       pages={459\ndash 467},
         url={https://doi.org/10.1007/s12044-012-0076-5},
}

\bib{Nyman2020}{article}{
      author={Nyman, Kathryn},
      author={Su, Francis~Edward},
      author={Zerbib, Shira},
       title={Fair division with multiple pieces},
        date={2020},
        ISSN={0166-218X},
     journal={Discrete Appl. Math.},
      volume={283},
       pages={115\ndash 122},
         url={https://doi.org/10.1016/j.dam.2019.12.018},
}

\bib{procaccia2015cake}{incollection}{
      author={Procaccia, Ariel~D.},
       title={Cake cutting algorithms},
        date={2016},
   booktitle={Handbook of computational social choice},
   publisher={Cambridge Univ. Press, New York},
       pages={311\ndash 329},
}

\bib{RoldanPensado2022}{article}{
      author={Rold\'{a}n-Pensado, Edgardo},
      author={Sober\'{o}n, Pablo},
       title={A survey of mass partitions},
        date={2022},
        ISSN={0273-0979},
     journal={Bull. Amer. Math. Soc. (N.S.)},
      volume={59},
      number={2},
       pages={227\ndash 267},
         url={https://doi.org/10.1090/bull/1725},
}

\bib{Soberon:2012kp}{article}{
      author={Sober{\'o}n, Pablo},
       title={{Balanced Convex Partitions of Measures in $R^d$}},
        date={2012},
     journal={Mathematika},
      volume={58},
      number={01},
       pages={71\ndash 76},
}

\bib{Soberon2022}{article}{
      author={Sober\'{o}n, Pablo},
       title={Fair distributions for more participants than allocations},
        date={2022},
        ISSN={2330-1511},
     journal={Proc. Amer. Math. Soc. Ser. B},
      volume={9},
       pages={404\ndash 414},
         url={https://doi.org/10.1090/bproc/129},
}

\bib{Steinhaus1938}{article}{
      author={Steinhaus, Hugo},
       title={A note on the ham sandwich theorem},
        date={1938},
     journal={Mathesis Polska},
      volume={9},
       pages={26\ndash 28},
}

\bib{Stromquist1980}{article}{
      author={Stromquist, Walter},
       title={How to cut a cake fairly},
        date={1980},
        ISSN={0002-9890},
     journal={Amer. Math. Monthly},
      volume={87},
      number={8},
       pages={640\ndash 644},
         url={https://doi.org/10.2307/2320951},
}

\bib{Su:1999es}{article}{
      author={Su, Francis~Edward},
       title={{Rental Harmony: Sperner's Lemma in Fair Division}},
        date={1999},
     journal={The American Mathematical Monthly},
      volume={106},
      number={10},
       pages={930 \ndash  942},
}

\bib{Vrecica:2001dh}{article}{
      author={Vrećica, Sini\v{s}a~T.},
      author={\v{Z}ivaljevi\'c, Rade~T.},
       title={{Conical equipartitions of mass distributions}},
        date={2001-04},
     journal={Discrete \& Computational Geometry},
      volume={25},
      number={3},
       pages={335\ndash 350},
}

\bib{Woodall1980}{article}{
      author={Woodall, Douglas~R.},
       title={Dividing a cake fairly},
        date={1980},
        ISSN={0022-247X},
     journal={J. Math. Anal. Appl.},
      volume={78},
      number={1},
       pages={233\ndash 247},
         url={https://doi.org/10.1016/0022-247X(80)90225-5},
}

\bib{Zivaljevic2017}{incollection}{
      author={{\v{Z}}ivaljevi{\'c}, Rade~T.},
       title={Topological methods in discrete geometry},
        date={2017},
   booktitle={{Handbook of Discrete and Computational Geometry}},
     edition={Third},
   publisher={CRC Press},
       pages={551\ndash 580},
}

\end{biblist}
\end{bibdiv}

\end{document}